\newtheorem{theorem}{Theorem}[section]
\newtheorem{lemma}[theorem]{Lemma}
\newtheorem{corollary}[theorem]{Corollary}
\newtheorem{proposition}[theorem]{Proposition}
\theoremstyle{definition}
\newtheorem{definition}[theorem]{Definition}
\newtheorem{example}[theorem]{Example}
\theoremstyle{remark}
\newtheorem{remark}[theorem]{Remark}
\DeclareMathOperator{\Hom}{Hom}
\DeclareMathOperator{\Tor}{Tor} 
\newcommand{\Bcal}{\ensuremath{\mathcal{B}}}
\newcommand{\Ycal}{\ensuremath{\mathcal{Y}}}
\newcommand{\Tcal}{\ensuremath{\mathcal{T}}}
\newcommand{\Ccal}{\ensuremath{\mathcal{C}}}
\newcommand{\Lcal}{\ensuremath{\mathcal{L}}}
\newcommand{\Wcal}{\ensuremath{\mathcal{W}}}
\newcommand{\Susp}{\ensuremath{\mathsf{Susp}}}
\newcommand{\Loc}{\ensuremath{\mathsf{Loc}}}
\newcommand{\Acal}{\ensuremath{\mathcal{A}}}
\newcommand{\Hcal}{\ensuremath{\mathcal{H}}}
\newcommand{\Scal}{\ensuremath{\mathcal{S}}}
\newcommand{\Ucal}{\ensuremath{\mathcal{U}}}
\newcommand{\Vcal}{\ensuremath{\mathcal{V}}}
\newcommand{\add}[1]{\mbox{\rm{add}}(#1)}
\newcommand{\rmod}[1]{\mbox{\rm{Mod}--}{#1}}
\newcommand{\lfmod}[1]{{#1}\mbox{--\rm{mod}}}
\newcommand{\lfproj}[1]{{#1}\mbox{--\rm{proj}}}
\newcommand{\ProjA}{\ensuremath\mbox{\rm{Proj}--$A$}}
\newcommand{\projA}{\ensuremath\mbox{\rm{proj}--$A$}}
\newcommand{\D}{\mathbb{D}}
\newcommand{\Z}{\mathbb{Z}}
\newcommand{\Zbb}{\mathbb{Z}}
\numberwithin{equation}{section}
\begin{document}
\title{Partial silting objects and smashing subcategories}
\author{Lidia Angeleri H\"ugel, Frederik Marks, Jorge Vit{\'o}ria}
\address{Lidia Angeleri H\"ugel, Dipartimento di Informatica - Settore di Matematica, Universit\`a degli Studi di Verona, Strada le Grazie 15 - Ca' Vignal, I-37134 Verona, Italy} \email{lidia.angeleri@univr.it}
\address{Frederik Marks, Institut f\"ur Algebra und Zahlentheorie, Universit\"at Stuttgart, Pfaffenwaldring 57, 70569 Stuttgart, Germany}
\email{marks@mathematik.uni-stuttgart.de}
\address{Jorge Vit\'oria, Department of Mathematics, City, University of London, Northampton Square, London EC1V 0HB, United Kingdom\newline  
\indent Dipartimento di Matematica e Informatica, Universit\`a degli Studi di Cagliari, Pallazzo delle Scienze, via Ospedale, 72, 09124 Cagliari, Italy}
\email{jorge.vitoria@unica.it}
\keywords{smashing subcategory, silting object, partial silting object, t-structure}
\subjclass[2010]{18E30,18E35,18E40}

%%%%%%%%%%%%%%%%%%%%%%%%%%%%%%%%%%%%%%%%%%%%%%%%%%%%%%%%%%%%%%%%%%%%%%%%%%%%%%%%%%%%%%

\begin{abstract}
We study smashing subcategories of a triangulated category with coproducts via silting theory. Our main result states that for derived categories of dg modules over a non-positive differential graded ring, every compactly generated localising subcategory is generated by a partial silting object. In particular, every such smashing subcategory admits a silting t-structure. 
\end{abstract}
\maketitle

%%%%%%%%%%%%%%%%%%%%%%%%%%%%%%%%%%%%%%%%%%%%%%%%%%%%%%%%%%%%%%%%%%%%%%%%%%%%%%%%%%%%%%

\section{Introduction}
Smashing subcategories of a triangulated category $\Tcal$ occur naturally as kernels of localisation functors which preserve coproducts. These subcategories have proved to be useful as they often induce a decomposition of $\Tcal$ into smaller triangulated categories, yielding a so-called recollement of triangulated categories in the sense of \cite{BBD}. A typical example of a smashing subcategory is given by a localising subcategory of $\Tcal$, i.e. a triangulated subcategory closed under coproducts, which is generated by a set of compact objects from $\Tcal$. The claim that all smashing subcategories of a compactly generated triangulated category are of this form is sometimes referred to as the Telescope conjecture, first stated for the stable homotopy category in Algebraic Topology (see \cite{Bou, Ra}). Affirmative answers to the conjecture were provided, for example, in \cite{N} and \cite{KS} for derived module categories of commutative noetherian rings and of hereditary rings, respectively. On the other hand, Keller provided an example of a smashing subcategory in the derived category of modules over a (non-noetherian) commutative ring that does not contain a single non-trivial compact object from the ambient derived category (see \cite{Ke}). So, although the Telescope conjecture does not hold in general, it is difficult to determine whether a given triangulated category satisfies it.

\smallskip

In this article, we study smashing subcategories via silting theory. This approach is motivated by some recent work indicating that localisations of categories and rings are intrinsically related to silting objects (see \cite{AMV2,MS,NSZ,PV}). In particular, it was shown in \cite{MS} that universal localisations of rings in the sense of \cite{Scho} are always controlled by a (not necessarily finitely generated) partial silting module. In other words, instead of localising our ring with respect to some set of \textit{small} objects, we can equally localise with respect to a single \textit{large} object that is silting in a suitable way. The main theorem of this article provides a triangulated analogue of this result, realising a  localisation at a set of \textit{compact} objects as a localisation at a single \textit{large} partial silting object.\\

\noindent{\bf Theorem}
{\em
Let $\Tcal$ be the derived category of dg modules over a non-positive differential graded ring and let $\Lcal$ be a localising subcategory of $\Tcal$ generated by a set of compact objects of $\Tcal$. Then there is a partial silting object $T$ in $\Tcal$ such that $\Lcal$ is the smallest localising subcategory containing $T$.}\\

In order to prove this result, we build on some recent work on (partial) silting objects in triangulated categories with coproducts (see \cite{NSZ,PV}). We show that partial silting objects, suitably defined, give rise to smashing subcategories and we study the torsion pairs associated with them. Moreover, for a large class of triangulated categories, we provide a classification of partial silting objects in terms of certain TTF triples, which will be a crucial step towards our main result. As a consequence, we further obtain that all smashing subcategories in the theorem above admit nondegenerate t-structures, which indicates that we should not expect our theorem to hold for arbitrary compactly generated triangulated categories (see e.g. Remark \ref{Remark silting}(2)).

\smallskip

The structure of the paper is as follows. In Section 2, we discuss some preliminaries on localising subcategories and torsion pairs. Section 3 is devoted to the notion of (partial) silting objects and their interplay with smashing subcategories. In particular, Theorem \ref{bij partial silting} provides a classification of (partial) silting objects in terms of their associated torsion pairs. In Section 4, we prove our main theorem.
\medskip

\noindent {\bf Acknowledgements.} 
The authors acknowledge support from the Program Ricerca di Base 2015 of the University of Verona. Lidia Angeleri H\"ugel was partly supported by Istituto Nazionale di Alta Matematica INdAM-GNSAGA. Jorge Vit\'oria acknowledges support from the Engineering and Physical Sciences Research Council of the United Kingdom, grant number EP/N016505/1.

%%%%%%%%%%%%%%%%%%%%%%%%%%%%%%%%%%%%%%%%%%%%%%%%%%%%%%%%%%%%%%%%%%%%%%%%%%%%%%%%%%%%%%

\section{Preliminaries}
By a subcategory of a given category, we always mean a full subcategory closed under isomorphisms.
Throughout, $\Tcal$ will denote a triangulated category admitting arbitrary (set-indexed) coproducts.
Given a subcategory $\Lcal$ of $\Tcal$ and a set of integers $I$, we write $\Lcal^{\perp_I}$ for the intersection $\bigcap_{n\in I}\mathsf{Ker}\Hom_\Tcal(\Lcal,-[n])$. The symbols $>n$, $<n$, $\leq n$ or $\geq n$ will be used to describe the obvious corresponding sets of integers and, if $I=\{0\}$, we simply write $\Lcal^\perp$. If the subcategory $\Lcal$ only consists of an object $X$, we write $X^{\perp_I}$ and $X^\perp$. 
We denote by $\mathsf{Add}(X)$ the subcategory of $\Tcal$ whose objects are summands of coproducts of $X$.
Given two subcategories $\Acal$ and $\Bcal$ of $\Tcal$, we denote by $\Acal\ast\Bcal$ the subcategory of $\Tcal$ consisting of the objects $X$ in $\Tcal$ for which there are objects $A$ in $\Acal$, $B$ in $\Bcal$ and a triangle $$A\longrightarrow X\longrightarrow B\longrightarrow A[1].$$

%%%%%%%%%%%%%%%%%%%%%%%%%%%%%%%%%%%%%%%%%%%%%%%%%%%%%%%%%%%%%%%%%%%%

\subsection{Localising subcategories and torsion pairs}

\begin{definition}
Given a subcategory $\Lcal$ of $\Tcal$ we say that $\Lcal$ is 
\begin{itemize}
\item \textbf{complete} if it is closed under products;
\item \textbf{cocomplete} if it is closed under coproducts;
\item \textbf{localising} if it is a cocomplete triangulated subcategory;
\item \textbf{coreflective} if its inclusion functor admits a right adjoint;
\item \textbf{smashing} if it is a coreflective localising subcategory of $\Tcal$ such that $\Lcal^{\perp}$ is cocomplete.
\end{itemize}
Further, if $\Lcal$ is a localising subcategory and $\Scal$ is a set of objects in $\Tcal$, we say that $\Scal$ {\bf generates} $\Lcal$, if $\Loc(\Scal)=\Lcal$, where $\Loc(\Scal)$ denotes the smallest localising subcategory of $\Tcal$ containing $\Scal$. If all the objects in $\Scal$ are compact in $\Tcal$, i.e. the functor $\Hom_\Tcal(S,-)$ commutes with coproducts for all $S$ in $\Scal$, we say that the localising subcategory $\Lcal$ is {\bf compactly generated}.
\end{definition}

\begin{remark}
More classically, a coreflective localising subcategory $\Lcal$ of $\Tcal$ is called smashing, if the right adjoint to the inclusion functor preserves coproducts. However, having a right adjoint to the inclusion that preserves coproducts is equivalent to having a right adjoint to the quotient functor from $\Tcal$ to $\Tcal/\Lcal$ that preserves coproducts. Since the latter right adjoint identifies $\Tcal/\Lcal$ with $\Lcal^\perp$, we obtain our alternative definition of a smashing subcategory (see, for example, \cite[Definition 3.3.2]{HPS}).
\end{remark}

\begin{definition}
A pair of subcategories $(\Vcal,\Wcal)$ of $\Tcal$ is said to be a \textbf{torsion pair} if 
\begin{enumerate}
\item $\Vcal$ and $\Wcal$ are closed under summands;
\item $\Hom_\Tcal(V,W)=0$ for any $V$ in $\Vcal$ and $W$ in $\Wcal$;
\item $\Vcal\ast \Wcal=\Tcal$.
\end{enumerate}
\end{definition}

It is easy to show that $(\Vcal,\Wcal)$ is a torsion pair if and only if $\Vcal^\perp=\Wcal$, ${}^\perp\Wcal=\Vcal$ and axiom (3) in the above definition holds. In particular, it follows that both classes $\Vcal$ and $\Wcal$ are closed under extensions. Recall that a subcategory of $\Tcal$ is called \textbf{suspended} if it is closed under extensions and positive shifts. 

\begin{definition}\label{def t-structure}
A torsion pair $(\Vcal,\Wcal)$ is said to be
\begin{itemize}
\item a \textbf{t-structure} if $\Vcal$ is suspended,  a \textbf{co-t-structure} if $\Wcal$ is suspended, and a \textbf{stable t-structure} if it is both a t-structure and a co-t-structure;
\item \textbf{left nondegenerate} if $\bigcap_{n\in\mathbb{Z}}\Vcal[n]=0$, \textbf{right nondegenerate} if $\bigcap_{n\in\mathbb{Z}}\Wcal[n]=0$ and \textbf{nondegenerate} if it is both left and right nondegenerate;
\item \textbf{generated by a set of objects $\Scal$} if $\Wcal=\Scal^\perp$.
\end{itemize}
If $(\Vcal,\Wcal)$ is a t-structure, we say that $\Vcal\cap\Wcal[1]$ is its \textbf{heart}. If $(\Vcal,\Wcal)$ is a co-t-structure, we say that $\Vcal[1]\cap\Wcal$ is its \textbf{coheart}. A triple $(\Ucal,\Vcal,\Wcal)$ of subcategories of $\Tcal$ is said to be a \textbf{torsion-torsionfree triple} (or \textbf{TTF triple}) if $(\Ucal,\Vcal)$ and $(\Vcal,\Wcal)$ are torsion pairs in $\Tcal$. We will say that a TTF triple $(\Ucal,\Vcal,\Wcal)$ is \textbf{suspended} if the class $\Vcal$ is suspended and it is called \textbf{stable} if $\Vcal$ is a triangulated subcategory of $\Tcal$. 
\end{definition}

\begin{remark}
Recall that a subcategory of $\Tcal$ is called an \textbf{aisle} if it is suspended and coreflective.
It was shown in \cite{KV} that the assignment $\Vcal\mapsto (\Vcal,\Vcal^\perp)$ yields a bijection between aisles and t-structures in $\Tcal$.
\end{remark}

It is an interesting question whether one can generate torsion pairs from a set of objects $\Scal$ in $\Tcal$. We consider the following two natural candidates
$$(\Ucal_\Scal:={}^\perp(\Scal^{\perp_{\leq 0}}), \Scal^{\perp_{\leq 0}}) \ \ \ \ \ \ \text{and}\ \ \ \ \ \ (\Lcal_\Scal:={}^\perp(\Scal^{\perp_\mathbb{Z}}),\Scal^{\perp_\mathbb{Z}}).$$
If indeed they are torsion pairs, then $(\Ucal_\Scal,\Scal^{\perp_{\leq 0}})$ is the t-structure generated by $\Scal$ and its positive shifts, and $\Ucal_\Scal$ is the smallest aisle containing $\Scal$. On the other hand, $(\Lcal_\Scal,\Scal^{\perp_{\mathbb{Z}}})$ would be the stable t-structure generated by all shifts of $\Scal$, and $\Lcal_\Scal$ would be the smallest coreflective localising subcategory containing the set $\Scal$.
Note that, since $\Scal^{\perp_{\mathbb{Z}}}$ is contained in $\Scal^{\perp_{\leq 0}}$, it follows that  $\Ucal_\Scal$ is contained in $\Lcal_\Scal$.
A natural candidate for $\Ucal_\Scal$ is the smallest cocomplete suspended subcategory containing $\Scal$, which we denote by $\Susp(\Scal)$. On the other hand, a natural candidate for $\Lcal_\Scal$ is $\Loc(\Scal)$, the smallest localising subcategory of $\Tcal$ containing $\Scal$. Indeed, it is easy to show that $\Susp(\Scal)^{\perp}=\Scal^{\perp_{\leq 0}}$ 
and that $\mathsf{Loc}(\Scal)^{\perp}=\Scal^{\perp_\mathbb{Z}}$ and, thus, we always have $\Susp(\Scal)\subseteq \Ucal_\Scal$ and $\Loc(\Scal)\subseteq \Lcal_\Scal$. In the following subsection, we introduce a large class of triangulated categories, for which torsion pairs can be generated as suggested above.

%%%%%%%%%%%%%%%%%%%%%%%%%%%%%%%%%%%%%%%%%%%%%%%%%%%%%%%%%%%%%%%%%%%%

\subsection{Well generated triangulated categories}
We will often consider triangulated categories that have a \textit{nice} set of generators.

\begin{definition}\cite{Neeman0,Krause}
Given a regular cardinal $\alpha$ and $\Tcal$ a triangulated category, we say that 
\begin{itemize}
\item an object $X$ in $\Tcal$ is \textbf{$\alpha$-small} if given any map $h:X\longrightarrow \coprod_{\lambda\in\Lambda} Y_\lambda$ for some family of objects $(Y_\lambda)_{\lambda\in\Lambda}$ in $\Tcal$, the map $h$ factors through a subcoproduct $\coprod_{\omega\in \Omega}Y_\omega$ where $\Omega$ is a subset of $\Lambda$ of cardinality strictly less than $\alpha$;
\item $\Tcal$ is \textbf{$\alpha$-well generated} if it has set-indexed coproducts and it has a set of objects $\Scal$ such that
\begin{itemize}
\item $\Scal^{\perp_{\mathbb{Z}}}=0$;
\item for every set of maps $(g_\lambda:X_\lambda\longrightarrow Y_\lambda)_{\lambda\in \Lambda}$ in $\Tcal$, if $\Hom_\Tcal(S,g_\lambda)$ is surjective for all $\lambda$ in $\Lambda$ and all $S$ in $\Scal$, then $\Hom_\Tcal(S,\coprod_{\lambda\in \Lambda}g_\lambda)$ is surjective for all $S$ in $\Scal$;
\item every object $S$ in $\Scal$ is $\alpha$-small.
\end{itemize}
\item $\Tcal$ is \textbf{well generated} if it is $\alpha$-well generated for some regular cardinal $\alpha$. 
\end{itemize}
\end{definition}

The following are examples of well generated triangulated categories.
\begin{itemize}
\item The derived category of a small differential graded category is $\aleph_0$-well generated or, equivalently, compactly generated (\cite[Subsection 5.3]{Kell}).
\item The homotopy category of projective modules over a ring is $\aleph_1$-well generated (\cite[Theorem~1.1]{Neeman1}). 
\item The derived category of a Grothendieck abelian category $\Acal$ is $\alpha$-well generated, with $\alpha$ a regular cardinal that depends on $\Acal$ (\cite[Theorem 0.2]{Nee0}).
\end{itemize}
An advantage of working with well generated triangulated categories is the following.

\begin{theorem}\cite[Proposition 3.8]{CGR}\cite[Theorem 2.3]{Nee}\label{existence of adjoints}
If $\Tcal$ is a well generated triangulated category and $\Scal\subset\Tcal$ is a set of objects, then both $(\Loc(\Scal),\Scal^{\perp_{\mathbb{Z}}})$ and $(\Susp(\Scal),\Scal^{\perp_{\leq 0}})$ are torsion pairs.
\end{theorem}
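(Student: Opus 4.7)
The essence of both claims is the construction, for every $X\in\Tcal$, of an approximation triangle
\[U\longrightarrow X\longrightarrow W\longrightarrow U[1]\]
with $U\in\Susp(\Scal)$, $W\in\Scal^{\perp_{\leq 0}}$ in the first case, and with $U\in\Loc(\Scal)$, $W\in\Scal^{\perp_\Zbb}$ in the second. Given such a triangle, together with the identities $\Susp(\Scal)^{\perp}=\Scal^{\perp_{\leq 0}}$ and $\Loc(\Scal)^{\perp}=\Scal^{\perp_\Zbb}$ recorded earlier in the excerpt, all torsion pair axioms follow: closure under summands of the left class is automatic (any cocomplete suspended subcategory of $\Tcal$ is closed under countable homotopy colimits, and the Eilenberg swindle splits idempotents there), while closure on the right is transparent.

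My plan is to produce these approximations via a transfinite small object argument. First, using well generation, enlarge the regular cardinal $\alpha$ so that $\Tcal$ is $\alpha$-well generated and every object of $\Scal$ (and hence each shift $S[n]$) is $\alpha$-small; this is always possible by Krause's analysis of well generated categories. For the t-structure case, let $\Scal^{+}=\{S[n]\mid S\in\Scal,\ n\geq 0\}$. Given $X$, define an ordinal-indexed tower $(X_\mu)_{\mu\leq\alpha}$ as follows. Set $X_0=X$. At a successor ordinal $\mu+1$, form the triangle
\[\bigoplus_{(T,f)} T\longrightarrow X_\mu\longrightarrow X_{\mu+1}\longrightarrow\bigoplus_{(T,f)} T[1]\]
where $(T,f)$ ranges over all morphisms $f\colon T\to X_\mu$ with $T\in\Scal^{+}$, and the map is the obvious one. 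At a limit ordinal $\mu\leq\alpha$, take the (transfinite) homotopy colimit of the preceding stages. Setting $W:=X_\alpha$, the fibre $U$ of the canonical map $X\to W$ is, by an inductive octahedral argument, obtained from the coproducts $\bigoplus_{(T,f)}T$ (each already in $\Susp(\Scal)$) via successive extensions and homotopy colimits; hence $U\in\Susp(\Scal)$. Conversely, any map $T\to W$ with $T\in\Scal^{+}$ factors through some $X_\mu$ with $\mu<\alpha$ by $\alpha$-smallness, and the composition $T\to X_\mu\to X_{\mu+1}$ vanishes by construction, so the map $T\to W$ vanishes; therefore $W\in\Scal^{\perp_{\leq 0}}$.

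The stable case is obtained by replacing $\Scal^{+}$ with $\Scal^{\Zbb}=\{S[n]\mid S\in\Scal,\ n\in\Zbb\}$ throughout. A short direct verification shows that $\Susp(\Scal^{\Zbb})$ is closed under $[-1]$ (hence triangulated, hence equal to $\Loc(\Scal)$): indeed, $\{X\in\Susp(\Scal^{\Zbb}):X[-1]\in\Susp(\Scal^{\Zbb})\}$ is a cocomplete suspended subcategory containing $\Scal^{\Zbb}$. The same construction then yields a triangle with $U\in\Loc(\Scal)$ and $W\in\Scal^{\perp_\Zbb}$.

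I expect the chief obstacle to be the termination at stage $\alpha$: one must verify that $\alpha$-smallness of the generators is preserved under all the operations used (large coproducts indexed by maps out of the successive $X_\mu$, and homotopy colimits at each limit ordinal), so that morphisms from $\Scal^{+}$ or $\Scal^{\Zbb}$ really are annihilated in the limit. This relies precisely on the compatibility axiom in the definition of $\alpha$-well generation, which governs the behaviour of $\Hom_\Tcal(S,-)$ on coproducts of maps and hence on transfinite homotopy colimits; this is the technical core of Neeman's and Krause's proofs of Brown representability for well generated categories, and once those ingredients are in hand, both torsion pairs are delivered by the construction above.
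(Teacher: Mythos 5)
Your proposal captures the morally correct picture: both torsion pairs are constructed by a Bousfield-localization/small-object-type argument, and this is indeed the spirit of the cited proofs. However, there is a genuine gap, concentrated exactly where you put the flag — the limit ordinal stage of the tower — and it is more severe than a bookkeeping verification.

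In a bare triangulated category, even a well generated one, there is no canonical notion of ``homotopy colimit of an $\alpha$-indexed sequence'' when $\alpha>\omega$. The Milnor telescope, defined as the cone of $1-\mathrm{shift}$ on a coproduct, only produces colimits over $\omega$. For longer sequences one must nest this construction through each limit ordinal, and because cones are only unique up to non-unique isomorphism, one loses functorial control of the resulting tower; in particular, the statement that a morphism from an $\alpha$-small object into $X_\alpha$ factors through some $X_\mu$ with $\mu<\alpha$ is \emph{not} a formal consequence of $\alpha$-smallness plus the perfectness axiom. It is precisely this obstruction that makes Neeman's proof of Theorem~2.3 of \cite{Nee} a genuinely delicate piece of work rather than a routine transfinite induction. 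Your claim that ``once those ingredients are in hand, both torsion pairs are delivered by the construction above'' is therefore too strong: the cited references do not push the naive tower through to stage $\alpha$, they argue around the problem.

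The two cases also merit different treatments and the literature reflects this. For the stable pair $(\Loc(\Scal),\Scal^{\perp_\Zbb})$ there is a cleaner route than your tower: one shows that $\Loc(\Scal)$ is itself well generated, invokes Brown representability \emph{inside} $\Loc(\Scal)$ to produce a right adjoint to the inclusion $\Loc(\Scal)\hookrightarrow\Tcal$, and then reads off the torsion pair from coreflectivity. This avoids transfinite colimits altogether and is essentially the argument of \cite{CGR}. That shortcut is unavailable for $(\Susp(\Scal),\Scal^{\perp_{\leq 0}})$, because $\Susp(\Scal)$ is not triangulated and so has no Brown representability of its own; this is why the t-structure statement requires the separate and harder argument of \cite{Nee}. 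Your proposal treats the two cases in parallel, but they are not parallel at the level of proof technique, and conflating them obscures where the real difficulty lies.
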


\begin{corollary}
Let $\Tcal$ be a well generated triangulated category.
\begin{enumerate}
\item If $\Scal$ is a set of objects in $\Tcal$ with $\Scal^{\perp_\Z}=0$, then we have $\Loc(\Scal)=\Tcal$.
\item If $\Lcal$ is a compactly generated localising subcategory of $\Tcal$, then $\Lcal$ is smashing.
\end{enumerate}
\end{corollary}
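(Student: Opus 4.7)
The plan is to read both parts directly off Theorem \ref{existence of adjoints}, combined with the two identities $\Susp(\Scal)^\perp=\Scal^{\perp_{\leq 0}}$ and $\Loc(\Scal)^\perp=\Scal^{\perp_\Zbb}$ recalled just before that theorem. The only real ingredient beyond quoting the theorem is a short computation of orthogonals and the usual fact that a $\perp$ of compact objects commutes with coproducts.

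For (1), Theorem \ref{existence of adjoints} produces the torsion pair $(\Loc(\Scal),\Scal^{\perp_\Zbb})$ in $\Tcal$. By assumption the right-hand class is zero, so every object $X\in\Tcal$ sits in an approximation triangle
$$L\longrightarrow X\longrightarrow W\longrightarrow L[1]$$
with $L\in\Loc(\Scal)$ and $W\in\Scal^{\perp_\Zbb}=0$. Hence $X\cong L\in\Loc(\Scal)$, proving $\Loc(\Scal)=\Tcal$.

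For (2), pick a set $\Scal$ of compact objects of $\Tcal$ with $\Loc(\Scal)=\Lcal$. Theorem \ref{existence of adjoints} again yields the torsion pair $(\Lcal,\Scal^{\perp_\Zbb})$, which shows in particular that the inclusion $\Lcal\hookrightarrow\Tcal$ admits a right adjoint, so $\Lcal$ is coreflective. Since $\Lcal$ is a triangulated subcategory it is stable under all shifts, so $\Lcal^\perp=\Lcal^{\perp_\Zbb}$, and the identity $\Loc(\Scal)^\perp=\Scal^{\perp_\Zbb}$ then gives $\Lcal^\perp=\Scal^{\perp_\Zbb}$. Finally, because every $S\in\Scal$ is compact, $\Hom_\Tcal(S,-)$ commutes with coproducts, so $\Scal^{\perp_\Zbb}$ is closed under coproducts. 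Thus $\Lcal^\perp$ is cocomplete, which together with coreflectivity and the localising property verifies the definition of a smashing subcategory.

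There is no real obstacle here: the work has already been done in Theorem \ref{existence of adjoints}, and both items are formal consequences. The one point that deserves attention is the small identification $\Lcal^\perp=\Scal^{\perp_\Zbb}$ used in (2), which relies both on $\Lcal$ being closed under shifts and on the fact $\Loc(\Scal)^\perp=\Scal^{\perp_\Zbb}$ mentioned in the text.
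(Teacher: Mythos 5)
Your proof is correct and follows essentially the same route as the paper's: both parts are read off the torsion pair $(\Loc(\Scal),\Scal^{\perp_{\mathbb Z}})$ supplied by Theorem \ref{existence of adjoints}, with coreflectivity coming for free and cocompleteness of $\Lcal^\perp=\Scal^{\perp_{\mathbb Z}}$ coming from compactness of the objects in $\Scal$. You merely spell out a couple of steps (the approximation triangle in (1), and the identification $\Lcal^\perp=\Scal^{\perp_{\mathbb Z}}$ in (2)) that the paper leaves implicit.
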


\begin{proof}
Statement (1) follows immediately from Theorem \ref{existence of adjoints}.
For $(2)$, it is enough to point out that the localising subcategory $\Lcal=\Loc(\Scal)$, where $\Scal$ is a set of compact objects in $\Tcal$, is coreflective by Theorem \ref{existence of adjoints}. It then follows from the definition of compact object that the subcategory $\Lcal^\perp=\Scal^{\perp_\Z}$ is closed under taking coproducts.
\end{proof}

We will make use of the following characterisation of smashing subcategories.

\begin{proposition}\cite[Propositions 4.2.4, 4.2.5 and 4.4.14]{Nicolas}\label{recollement}
If $\Tcal$ is well generated, then the assignment $\Lcal\mapsto (\Lcal,\Lcal^\perp,(\Lcal^\perp)^\perp)$ yields a bijection between smashing subcategories of $\Tcal$ and stable TTF triples in $\Tcal$. Moreover, such a TTF triple gives rise to a recollement as depicted below, where $j_!$ and $i_*$ denote the inclusions of $\Lcal$ and $\Lcal^\perp$, respectively, into $\Tcal$.
\begin{equation}\nonumber   
\xymatrix@C=0.5cm{\Lcal^{\perp} \ar[rrr]^{i_*} &&& \Tcal \ar[rrr]^{j^*}  \ar @/_1.5pc/[lll]_{i^*}  \ar @/^1.5pc/[lll]_{i^!} &&& \Lcal\ar @/_1.5pc/[lll]_{j_!} \ar @/^1.5pc/[lll]_{j_*} } 
\end{equation}
This assignment defines a further bijection between stable TTF triples in $\Tcal$ and equivalence classes of recollements of $\Tcal$.
\end{proposition}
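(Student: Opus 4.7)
The plan is to split the claim into three steps. Step (i): for $\Lcal$ smashing, show that $(\Lcal, \Lcal^\perp, (\Lcal^\perp)^\perp)$ is a stable TTF triple. Step (ii): construct the inverse assignment $(\Ucal,\Vcal,\Wcal)\mapsto \Ucal$ and check mutual invertibility. Step (iii): identify stable TTF triples with equivalence classes of recollements. Well-generatedness enters decisively and essentially only in step (i).

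For step (i), coreflectivity of $\Lcal$ yields, for each $X \in \Tcal$, a counit triangle $R(X) \to X \to C(X) \to R(X)[1]$ with $R(X) \in \Lcal$. Applying $\Hom_\Tcal(L, -)$ for $L \in \Lcal$ and using the adjunction shows $C(X) \in \Lcal^\perp$, so $(\Lcal, \Lcal^\perp)$ is a torsion pair, stable because $\Lcal$ is triangulated. The class $\Lcal^\perp$ is closed under shifts, extensions and summands, and under coproducts by the smashing assumption, hence it is localising. The delicate step is producing the second torsion pair, for which $\Lcal^\perp$ must be coreflective. Here I would use that the Verdier quotient $\Tcal/\Lcal$ is equivalent to $\Lcal^\perp$ and inherits well-generatedness from $\Tcal$ (by a theorem of Krause on quotients of well-generated categories by smashing subcategories). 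Since the inclusion $i_* : \Lcal^\perp \hookrightarrow \Tcal$ is exact and coproduct-preserving (by cocompleteness of $\Lcal^\perp$), Brown representability applied in $\Lcal^\perp$ furnishes a right adjoint $i^!$ to $i_*$. The counit of this adjunction then yields approximation triangles exhibiting $(\Lcal^\perp, (\Lcal^\perp)^\perp)$ as a torsion pair, and stability is automatic.

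Step (ii) is essentially formal: given a stable TTF triple $(\Ucal, \Vcal, \Wcal)$, the class $\Ucal$ is triangulated (by stability), coreflective (as the left class of $(\Ucal, \Vcal)$), and $\Ucal^\perp = \Vcal$ is cocomplete because $\Vcal$, being the left class of the torsion pair $(\Vcal, \Wcal)$, is closed under coproducts via coproduct-stability of approximation triangles. Thus $\Ucal$ is smashing, and the torsion pair identities $\Ucal^\perp = \Vcal$ and $\Vcal^\perp = \Wcal$ show that the two assignments are mutually inverse. For step (iii), from a stable TTF triple the six functors of a recollement assemble as follows: $i_*$ and $j_!$ are the inclusions of $\Lcal^\perp$ and $\Lcal$, while the adjoint triples $(i^*, i_*, i^!)$ and $(j_!, j^*, j_*)$ are built from the torsion radicals of $(\Lcal, \Lcal^\perp)$ and $(\Lcal^\perp, (\Lcal^\perp)^\perp)$. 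The vanishings $i^* j_! = 0$ and $j^* i_* = 0$ are direct translations of $\Hom_\Tcal(\Lcal, \Lcal^\perp) = 0$. Conversely, a recollement determines the triple of essential images $(\Img j_!, \Img i_*, \Img j_*)$, and uniqueness of adjoints up to natural isomorphism makes the two constructions mutually inverse modulo equivalence of recollements.

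The main obstacle is the coreflectivity of $\Lcal^\perp$ in step (i). In a general triangulated category with coproducts, a localising subcategory of the form $\Lcal^\perp$ need not admit a right adjoint to its inclusion, so the second torsion pair can fail to exist; well-generatedness is precisely what allows one to invoke Brown representability and produce $i^!$. Everything else in the proof reduces to bookkeeping with adjoints and approximation triangles.
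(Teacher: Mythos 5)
This proposition is quoted in the paper directly from Nicol\'as's thesis without proof, so there is no in-paper argument to compare against; what follows is simply an assessment of your reconstruction, which is in outline correct and follows the expected route (smashing $\Rightarrow$ TTF triple $\Rightarrow$ recollement).

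The one place where you lean on a vague citation precisely where the real work lies is the coreflectivity of $\Lcal^\perp$. You invoke ``a theorem of Krause on quotients of well-generated categories by smashing subcategories.'' Be careful: the standard quotient theorems (Neeman's Theorem 4.4.9, for instance) concern Verdier quotients by localising subcategories \emph{generated by a set of objects}, and it is not a priori given that an arbitrary smashing subcategory in the sense of the paper's Definition 2.1 is so generated. The claim you need is nevertheless true, and the cleanest way to close this gap inside the framework the paper has already set up is to transport generators along the reflection: let $\Scal$ be a set of $\alpha$-small generators for $\Tcal$ and let $q\colon\Tcal\to\Lcal^\perp$ be the left adjoint of the inclusion $i_*$, which exists because $(\Lcal,\Lcal^\perp)$ is a torsion pair. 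Then $q(\Scal)$ is a set of objects of $\Tcal$ lying in $\Lcal^\perp$, and the adjunction $q\dashv i_*$ together with $\Scal^{\perp_\Z}=0$ shows that any $V\in\Lcal^\perp$ orthogonal to all shifts of $q(\Scal)$ is zero. Applying Theorem \ref{existence of adjoints} to the set $q(\Scal)$ and intersecting the resulting torsion pair $(\Loc(q(\Scal)),q(\Scal)^{\perp_\Z})$ with $\Lcal^\perp$ (which is permitted because $\Loc(q(\Scal))\subseteq\Lcal^\perp$ and $\Lcal^\perp$ is triangulated) gives $\Lcal^\perp=\Loc(q(\Scal))$. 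Coreflectivity of $\Lcal^\perp$ now follows directly from Theorem \ref{existence of adjoints}, with no separate appeal to Brown representability of the subcategory. If you prefer your Brown-representability route, the same transport of generators is exactly what proves $\Lcal^\perp$ is $\alpha$-well generated (one also needs $\Lcal^\perp$ cocomplete in $\Tcal$ so that coproducts agree and $\alpha$-smallness of $q(S)$ can be checked via the adjunction), so either way this step must be spelled out rather than cited in passing. Steps (ii) and (iii) are indeed essentially bookkeeping as you say; just note that in (ii) the closure of $\Ucal$ under coproducts, needed for $\Ucal$ to be localising, comes from $\Ucal={}^\perp\Vcal$ and is worth stating explicitly.
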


For the precise definition of recollement, we refer to \cite{BBD}. 
A further property of well generated triangulated categories is related to a representability problem of cohomological functors, i.e.~of functors that send triangles to long exact sequences of abelian groups.

\begin{definition}
We say that our triangulated category $\Tcal$ satisfies \textbf{Brown representability} if every (contravariant) cohomological  functor $\Tcal^{op}\longrightarrow \mathsf{Mod}$-$\mathbb{Z}$ sending coproducts to products is representable. Dually, $\Tcal$ satisfies \textbf{dual Brown representability} if it is complete and every (covariant) cohomological functor $\Tcal\longrightarrow \mathsf{Mod}$-$\mathbb{Z}$ sending products to products is representable.
\end{definition}

Recall that triangulated categories with Brown representability are complete (\cite[Proposition 8.4.6]{Neeman0}). 

\begin{theorem}\cite[Theorem 8.3.3 and Proposition 8.4.2]{Neeman0}
Any well generated triangulated category satisfies Brown representability. Any compactly generated triangulated category satisfies, furthermore, dual Brown representability.
\end{theorem}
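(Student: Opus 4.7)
The plan is to prove both statements by the Adams representability method, constructing the representing object via a transfinite induction driven by a generating set of $\alpha$-small objects.

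For Brown representability, fix an $\alpha$-well generating set $\Scal$ and a cohomological functor $H : \Tcal^{op} \longrightarrow \mathsf{Mod}$-$\mathbb{Z}$ sending coproducts to products. The first step is the base case: set $X_0 := \coprod_{S \in \Scal,\,s \in H(S)} S$, so that the natural transformation $\eta_0 : \Hom_\Tcal(-, X_0) \longrightarrow H$ induced by Yoneda is surjective at every $S \in \Scal$. The second step is an inductive Adams tower $\{X_\beta\}_{\beta < \alpha}$: at successor stages, cone off a coproduct of maps $\bigoplus_i S_i \longrightarrow X_\beta$ indexed by the kernel of $\eta_\beta$ evaluated on generators, producing $\eta_{\beta+1}$ with smaller kernel at $\Scal$; at limit stages, pass to the homotopy colimit. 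The third step sets $X := \mathrm{hocolim}_{\beta < \alpha} X_\beta$ and verifies that the induced $\eta : \Hom_\Tcal(-, X) \longrightarrow H$ is an isomorphism on $\Scal$. Finally, since $\Scal^{\perp_\Z} = 0$ forces $\Loc(\Scal) = \Tcal$ by the corollary following Theorem \ref{existence of adjoints}, a five-lemma argument along triangles extends the isomorphism from $\Scal$ to all of $\Tcal$.

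The main obstacle is proving convergence of the Adams tower at stage $\alpha$. Surjectivity of $\Hom_\Tcal(S, X) \longrightarrow H(S)$ for $S \in \Scal$ is immediate, because $\alpha$-smallness forces every morphism from $S$ into the homotopy colimit to factor through some earlier $X_\beta$ where surjectivity already holds. Injectivity is the delicate part: one must show that every element in the kernel of $\eta_\beta$ at a generator is eventually killed by stage $\alpha$, and this is where the second well-generatedness axiom (coproduct-compatibility of surjectivity on $\Scal$) becomes essential, as it controls the behaviour of the Adams kernels through the homotopy colimit. Without this axiom, kernels could accumulate at limit ordinals and the induction would not stabilise at $\alpha$.

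For dual Brown representability in the compactly generated case, the approach is dual in spirit but requires a different technical input: rather than using $\alpha$-smallness, one exploits that every compact object commutes with arbitrary coproducts, and uses a Freyd-style construction via the abelian category of coherent additive functors on the full subcategory of compact objects to produce a representing object for any product-preserving cohomological functor. The subtlety is that compactness is strictly stronger than the smallness condition permitted by the well-generatedness axioms, which is precisely why the dual statement requires compact generation rather than mere well generation, and why the Brown-style Adams tower cannot be adapted directly.
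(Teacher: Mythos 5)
This statement is cited from Neeman's book (Theorem 8.3.3 and Proposition 8.4.2 of \cite{Neeman0}); the paper does not give a proof, so your proposal is being measured against Neeman's published argument rather than a proof in the paper itself.

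Your Adams-tower plan is the right general strategy for Brown representability, but there are two concrete gaps. First, the tower must be countable: in a triangulated category the only available homotopy colimit is the sequential one, defined as the cone of $1-\mathrm{shift}$ on $\coprod_{n\in\Nbb} X_n$, and there is no canonical homotopy colimit at an arbitrary limit ordinal $\beta<\alpha$. Neeman's proof therefore constructs an $\omega$-indexed sequence $X_0\to X_1\to X_2\to\cdots$; the regular cardinal $\alpha$ enters through the smallness of the generators (so that maps from $S\in\Scal$ into coproducts factor through small subcoproducts), not through the length of the tower. Your transfinite induction ``up to $\alpha$'' does not make sense as stated and is also unnecessary. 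Second, the last step is circular: the corollary you invoke (that $\Scal^{\perp_\Z}=0$ implies $\Loc(\Scal)=\Tcal$) is itself a consequence of Theorem \ref{existence of adjoints}, whose proof rests on Brown representability for well generated categories. Neeman avoids this by arguing directly that the full subcategory of objects $Y$ for which $\eta_Y$ is an isomorphism is triangulated and closed under coproducts and then using the well-generation axioms (in particular the second one, governing surjectivity across coproducts) to push the isomorphism from $\Scal$ to all of $\Tcal$ without presupposing $\Loc(\Scal)=\Tcal$; a careful treatment of the injectivity of $\eta$ at the colimit is the technical heart of the argument and cannot be outsourced to a corollary that depends on the theorem.

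Your sketch for dual Brown representability is also not the one in \cite{Neeman0}. Neeman's Proposition 8.4.2 does not go through a Freyd-style embedding into coherent functors; instead it bootstraps from the already-proved Brown representability by means of Brown--Comenetz duality: for each compact object $c$ the functor $\Hom_\Zbb(\Hom_\Tcal(c,-),\Qbb/\Zbb)$ (with $\Qbb/\Zbb$ an injective cogenerator) is cohomological and sends coproducts to products, hence is representable by an object $\hat c$, and the resulting set $\{\hat c : c\ \text{compact}\}$ serves as a cogenerating set against which a dual tower argument can be run. Your remark that compactness is ``strictly stronger'' than $\alpha$-smallness is correct and does explain why the dual statement needs compact generation, but the actual mechanism Neeman uses is duality, not a Freyd envelope. (Here $\Qbb$ denotes the rationals; I use $\Qbb/\Zbb$ only descriptively.)

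\newcommand{\Qbb}{\ensuremath{\mathbb{Q}}}
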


%%%%%%%%%%%%%%%%%%%%%%%%%%%%%%%%%%%%%%%%%%%%%%%%%%%%%%%%%%%%%%%%%%%%%%%%%%%%%%%%%%%%%%

\section{Partial silting objects}\label{Section silting}
We start by introducing (partial) silting objects in a triangulated category $\Tcal$ with coproducts. We will study the smashing subcategories associated with them. In a second subsection, we provide a characterisation of (partial) silting objects in terms of certain TTF triples.

\subsection{On the definition of (partial) silting objects}
\begin{definition}\label{def silting}
An object $T$ of $\Tcal$ is said to be 
\begin{enumerate}
\item \textbf{partial silting} if 
\begin{enumerate}
\item[(PS1)] $T$ and its positive shifts generate a t-structure, i.e. $(\Ucal_T:={}^\perp(T^{\perp_{\leq 0}}),T^{\perp_{\leq 0}})$ is a torsion pair;
\item[(PS2)] $\Ucal_T$ is contained in $T^{\perp_{>0}}$;
\item[(PS3)] $T^{\perp_{>0}}$ is cocomplete.
\end{enumerate}
\item \textbf{silting} if $(T^{\perp_{>0}},T^{\perp_{\leq 0}})$ is a t-structure in $\Tcal$.
\end{enumerate}
Two partial silting objects $T$ and $T^\prime$ are said to be \textbf{equivalent} if $\mathsf{Add}(T)=\mathsf{Add}(T^\prime)$.
\end{definition}

\begin{remark}\label{well gen partial}
If $\Tcal$ is a well generated triangulated category, an object $T$ is partial silting if and only if $T$ lies in $T^{\perp_{>0}}$ and $T^{\perp_{>0}}$ is cocomplete (see Theorem \ref{existence of adjoints}).  
\end{remark}

In \cite{NSZ}, partial silting objects were defined as those $T$ satisfying conditions (PS1) and (PS2). Inspired by the definition of partial tilting modules proposed in \cite[Definition 1.4]{CT}, our definition of partial silting objects includes the extra condition (PS3). This will allow us to prove that partial silting objects in well generated triangulated categories generate smashing subcategories. Still, our partial silting objects satisfy the properties proved in \cite{NSZ}, among which we highlight the following.

\begin{lemma}\cite[Theorem 1(2)]{NSZ}\label{lemma NSZ}
If $T$ is an object of $\Tcal$ satisfying (PS1) and (PS2), then 
$T^{\perp_{>0}}=\Ucal_T\ast T^{\perp_{\mathbb{Z}}}.$
\end{lemma}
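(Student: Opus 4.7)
The plan is to prove both inclusions separately, with the harder direction being obtained by applying the torsion pair from (PS1) to an arbitrary object of $T^{\perp_{>0}}$ and then using a long-exact-sequence argument to show the truncation triangle has its right term in $T^{\perp_\Z}$.

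For the inclusion $\Ucal_T \ast T^{\perp_{\Z}} \subseteq T^{\perp_{>0}}$, I would first observe that $T^{\perp_{>0}}$ is automatically closed under extensions (it is a left orthogonal of the form $\bigcap_{n>0} \Ker \Hom_\Tcal(T,-[n])$, which is an intersection of extension-closed subcategories). Both factors of the join lie inside $T^{\perp_{>0}}$: the class $\Ucal_T$ by hypothesis (PS2), and $T^{\perp_\Z}$ trivially. Hence any extension of an object of $T^{\perp_\Z}$ by an object of $\Ucal_T$ also lies in $T^{\perp_{>0}}$.

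For the reverse inclusion, I would take $X \in T^{\perp_{>0}}$ and apply (PS1), which by hypothesis gives a torsion pair $(\Ucal_T, T^{\perp_{\leq 0}})$. This produces a triangle
\[
U \longrightarrow X \longrightarrow V \longrightarrow U[1]
\]
with $U \in \Ucal_T$ and $V \in T^{\perp_{\leq 0}}$. It then suffices to upgrade the membership $V \in T^{\perp_{\leq 0}}$ to $V \in T^{\perp_{\Z}} = T^{\perp_{\leq 0}} \cap T^{\perp_{>0}}$, i.e.\ to show that $\Hom_\Tcal(T, V[n]) = 0$ for every $n > 0$.

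To see this, I would apply the cohomological functor $\Hom_\Tcal(T,-)$ to the above triangle and look at the relevant segment of the long exact sequence. For each $n > 0$, the terms $\Hom_\Tcal(T, U[n])$ and $\Hom_\Tcal(T, U[n+1])$ vanish because $U \in \Ucal_T \subseteq T^{\perp_{>0}}$ by (PS2), and the middle term $\Hom_\Tcal(T, X[n])$ vanishes because $X \in T^{\perp_{>0}}$ by assumption. Exactness forces $\Hom_\Tcal(T, V[n]) = 0$, completing the proof that $V \in T^{\perp_\Z}$ and hence $X \in \Ucal_T \ast T^{\perp_\Z}$. There is no real obstacle here; the only thing to be careful about is keeping track of which orthogonal class each object belongs to, since the conclusion rests entirely on (PS2) supplying the extra vanishing that bridges the gap between $T^{\perp_{\leq 0}}$ and $T^{\perp_\Z}$.
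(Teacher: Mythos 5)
Your proof is correct. Note, though, that the paper does not actually prove this lemma; it simply cites \cite[Theorem 1(2)]{NSZ}, so there is no internal argument to compare against. Your argument is the natural one: the easy inclusion follows since $T^{\perp_{>0}}$ is extension-closed and contains both $\Ucal_T$ (by (PS2)) and $T^{\perp_{\Z}}$; for the reverse, you truncate $X \in T^{\perp_{>0}}$ via the torsion pair of (PS1) to get a triangle $U \to X \to V \to U[1]$, and the long exact sequence for $\Hom_\Tcal(T,-)$ together with (PS2) upgrades $V\in T^{\perp_{\leq 0}}$ to $V\in T^{\perp_{\Z}}$. The only small redundancy is that in the segment of the long exact sequence you use, the vanishing of $\Hom_\Tcal(T,U[n+1])$ (rather than of $\Hom_\Tcal(T,U[n])$, which you also mention) is what actually traps $\Hom_\Tcal(T,V[n])$ between two zeros; this is harmless, but you could trim the statement. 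Everything hinges, as you observe, on (PS2) providing the extra vanishing above degree zero, and your bookkeeping of the orthogonal classes is accurate.
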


It is a direct consequence of Lemma \ref{lemma NSZ} that a partial silting object $T$ in $\Tcal$ is silting if and only if $T^{\perp_\Z}=0$.
The above notion of a silting object appeared already in \cite{PV} and \cite{NSZ}. Examples of silting objects are silting complexes in the unbounded derived category of modules over a ring (see \cite{AMV2} and \cite{Wei1}). Notice that also unbounded complexes can occur as silting objects according to our definition, see \cite[Example 7.9]{A} and \cite{AH} for examples.

The following is an example of a partial silting object that is not silting (see also \cite[Subsection 4.4]{Laking}).

\begin{example}
Let $A$ be a (unital) ring and $\Tcal:=\mathbb{K}(\ProjA)$ be the homotopy category of projective right $A$-modules. As an object in $\Tcal$, $A$ is partial silting. Indeed, $A$  obviously   lies in    the subcategory $A^{\perp_{>0}}$, which is cocomplete as it consists of the complexes in $\Tcal$ with vanishing positive cohomologies. Now the claim follows from Remark \ref{well gen partial}
since $\Tcal$ is well generated.

As discussed above, $A$ is silting in $\Tcal$ if and only if $A^{\perp_\Z}=0$. However, if $A$ is a finite dimensional algebra (over a field) with infinite global dimension, then $A^{\perp_\Z}$ is non-trivial. In fact, in this situation $\Tcal$ is compactly generated and, moreover, the functor $\Hom_A(-,A)$ induces an anti-equivalence between the subcategory of compact objects $\Tcal^c$ in $\Tcal$ and the bounded derived category $\mathbb{D}^b(\lfmod{A})$ of finitely generated left $A$-modules (\cite[Theorems 2.4 and 3.2]{Jorgensen}). It then follows from \cite[Theorem 2.1]{Neeman} (see also \cite[Proposition 1.7]{KI}) that the subcategory $A^{\perp_\Z}$ is compactly generated and that $\Tcal^c/\mathbb{K}^b(\projA)$, which is anti-equivalent to $\mathbb{D}^b(\lfmod{A})/\mathbb{K}^b(\lfproj{A})$, embeds fully faithfully into the subcategory of compact objects of $A^{\perp_\Z}$. Here, $\mathbb{K}^b(\projA)$ and $\mathbb{K}^b(\lfproj{A})$ denote the bounded homotopy categories of finitely generated projective right, respectively left, $A$-modules.
Finally, our assumptions on $A$ guarantee that there is a finite dimensional (even simple) left $A$-module $M$ with infinite projective dimension. Then $M$ yields a non-zero object in $\mathbb{D}^b(\lfmod{A})/\mathbb{K}^b(\lfproj{A})$ and, thus, $A^{\perp_{\Z}}$ is non-trivial, as claimed.
\end{example}

The next proposition relates partial silting objects to smashing subcategories.

\begin{proposition}\label{partial silting def}
Let $T$ be an object in a well generated triangulated category $\Tcal$. Then the following are equivalent.
\begin{enumerate}
\item $T$ is partial silting;
\item $\mathsf{Loc}(T)$ is smashing and $T$ is silting in $\mathsf{Loc}(T)$;
\item $T^{\perp_{>0}}$ is an aisle of $\Tcal$ containing $T$.
\end{enumerate}
\end{proposition}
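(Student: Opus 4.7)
The plan is to prove the cycle $(3) \Rightarrow (1) \Rightarrow (2) \Rightarrow (1) \Rightarrow (3)$, with $(1) \Rightarrow (3)$ being the main step. The implication $(3) \Rightarrow (1)$ will be immediate from Remark~\ref{well gen partial}: any aisle is closed under coproducts of $\Tcal$ (the $\Lcal^\perp$-truncation of a coproduct of aisle objects must split off a trivial summand, since $\Lcal^\perp$ is closed under $[-1]$ whenever $\Lcal$ is suspended), so $T^{\perp_{>0}}$ is cocomplete, and by hypothesis it contains $T$.

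For $(1) \Rightarrow (2)$, I first plan to upgrade cocompleteness of $T^{\perp_{>0}}$ (axiom (PS3)) to cocompleteness of $T^{\perp_{\Z}}$ by shift invariance: if $X_\lambda \in T^{\perp_{\Z}}$, then for every integer $k$, $(\coprod X_\lambda)[k]$ is a coproduct of objects of $T^{\perp_{\Z}} \subseteq T^{\perp_{>0}}$, hence lies in $T^{\perp_{>0}}$, so $\coprod X_\lambda \in T^{\perp_{\Z}}$. Proposition~\ref{recollement} then makes $\Loc(T)$ smashing, with TTF triple $(\Loc(T), T^{\perp_{\Z}}, (T^{\perp_{\Z}})^\perp)$. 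Since $\Ucal_T = \Susp(T) \subseteq \Loc(T)$, the torsion pair $(\Ucal_T, T^{\perp_{\leq 0}})$ restricts to $(\Ucal_T, T^{\perp_{\leq 0}} \cap \Loc(T))$ in $\Loc(T)$; and since $\Loc(T) \cap T^{\perp_{\Z}} = 0$, Lemma~\ref{lemma NSZ} forces $T^{\perp_{>0}} \cap \Loc(T) = \Ucal_T$. So the restricted pair coincides with $(T^{\perp_{>0}} \cap \Loc(T), T^{\perp_{\leq 0}} \cap \Loc(T))$, witnessing that $T$ is silting in $\Loc(T)$.

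The converse $(2) \Rightarrow (1)$ uses that smashing of $\Loc(T)$ forces $T^{\perp_{\Z}}$ to be cocomplete (by Proposition~\ref{recollement}). Given a family $X_\lambda \in T^{\perp_{>0}}$, TTF-decomposing each yields triangles $L_\lambda \to X_\lambda \to W_\lambda \to L_\lambda[1]$ with $L_\lambda \in \Loc(T)$ and $W_\lambda \in T^{\perp_{\Z}}$; a short long exact sequence argument, using $\Hom_\Tcal(T, W_\lambda[n]) = 0$ for all $n$ and $\Hom_\Tcal(T, X_\lambda[n]) = 0$ for $n > 0$, places each $L_\lambda$ in $T^{\perp_{>0}} \cap \Loc(T)$, which is an aisle of $\Loc(T)$ by silting, hence cocomplete. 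So $\coprod L_\lambda \in T^{\perp_{>0}}$, and combined with $\coprod W_\lambda \in T^{\perp_{\Z}} \subseteq T^{\perp_{>0}}$ and closure of $T^{\perp_{>0}}$ under extensions, this yields $\coprod X_\lambda \in T^{\perp_{>0}}$. Remark~\ref{well gen partial} then gives partial silting.

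The hard part is $(1) \Rightarrow (3)$: exhibiting coreflectivity of $T^{\perp_{>0}}$ (suspendedness and containment of $T$ are clear). For $X \in \Tcal$, I plan to first apply the torsion pair $(\Ucal_T, T^{\perp_{\leq 0}})$ to obtain $U \to X \to V \to U[1]$, and then apply $(T^{\perp_{\Z}}, (T^{\perp_{\Z}})^\perp)$ (whose existence is secured by $(1) \Rightarrow (2)$) to $V$, producing $W \to V \to V' \to W[1]$. The octahedral axiom on the composition $X \to V \to V'$ then produces a triangle $Y \to X \to V' \to Y[1]$ whose fibre $Y$ sits in $U \to Y \to W \to U[1]$; by Lemma~\ref{lemma NSZ}, $Y \in \Ucal_T \ast T^{\perp_{\Z}} = T^{\perp_{>0}}$. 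A brief long exact sequence check using $V, W \in T^{\perp_{\leq 0}}$ places $V' \in T^{\perp_{\leq 0}} \cap (T^{\perp_{\Z}})^\perp = \Ucal_T^\perp \cap (T^{\perp_{\Z}})^\perp$; applying Lemma~\ref{lemma NSZ} once more to any $A \in T^{\perp_{>0}}$ then forces $\Hom_\Tcal(A, V') = 0$, so $V' \in (T^{\perp_{>0}})^\perp$. The main obstacle will be the octahedral assembly of these two torsion-pair decompositions and the verification that $V'$ is orthogonal to all of $T^{\perp_{>0}}$, not merely to the middle piece $T^{\perp_{\Z}}$.
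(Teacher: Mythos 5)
Your proof is correct, and it genuinely differs from the paper's in one place. The paper proves $(2)\Rightarrow(3)$ by invoking the gluing theorem of \cite{BBD}: using the recollement induced by the smashing subcategory $\Loc(T)$, it glues the t-structure $(T^{\perp_{>0}}\cap\Loc(T), T^{\perp_{\leq 0}}\cap\Loc(T))$ on $\Loc(T)$ with the degenerate t-structure $(T^{\perp_{\mathbb{Z}}},0)$ on $T^{\perp_\mathbb{Z}}$, and identifies the glued aisle with $T^{\perp_{>0}}$ via Lemma~\ref{lemma NSZ}. You instead prove $(1)\Rightarrow(3)$ directly by an explicit two-step truncation and an octahedron: first truncate $X$ by $(\Ucal_T,T^{\perp_{\leq 0}})$, then truncate the co-aisle piece by the stable torsion pair $(T^{\perp_\mathbb{Z}},(T^{\perp_\mathbb{Z}})^\perp)$, and splice. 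This is the same underlying construction (the resulting truncation triangle is unique), but done by hand; it avoids relying on the gluing formalism of \cite{BBD} at the cost of carrying out the octahedral and orthogonality checks yourself, all of which you do correctly. Your $(2)\Rightarrow(1)$ is correct but redundant once $(1)\Rightarrow(3)$ and $(3)\Rightarrow(1)$ are in place.

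Two small points of rigor are worth tightening. In $(1)\Rightarrow(2)$, cocompleteness of $T^{\perp_\mathbb{Z}}$ alone does not make $\Loc(T)$ smashing; you also need coreflectivity of $\Loc(T)$, which comes from Theorem~\ref{existence of adjoints} (well-generatedness), not from Proposition~\ref{recollement} — the latter only applies after smashingness is established. In $(2)\Rightarrow(1)$, before applying Remark~\ref{well gen partial} you should note that $T\in T^{\perp_{>0}}$: since $T$ is silting in $\Loc(T)$, $T^{\perp_{>0}}\cap\Loc(T)$ is the aisle of a t-structure in $\Loc(T)$ and in particular contains $T$ together with all its nonnegative shifts. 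Finally, your parenthetical argument that aisles are cocomplete is more roundabout than necessary: once $(\Vcal,\Vcal^\perp)$ is a torsion pair, $\Vcal={}^\perp(\Vcal^\perp)$ is automatically closed under coproducts.
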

\begin{proof}
If $T$ is partial silting, then $T^{\perp_{\mathbb{Z}}}=\bigcap_{n\in\mathbb{Z}}T^{\perp_{>0}}[n]$ is cocomplete and it follows from Theorem \ref{existence of adjoints} that $\Loc(T)$ is smashing. Moreover, $\Ucal_\Tcal=\Susp(T)$ is contained in $\Loc(T)$, and, therefore, $T$ and its positive shifts generate the t-structure $(\Ucal_T,T^{\perp_{\leq 0}}\cap \Loc(T))$ in $\Loc(T)$. Since, by assumption, $\Ucal_T$ is contained in $T^{\perp_{>0}}\cap\Loc(T)$ and $T^{\perp_{>0}}\cap\Loc(T)$ is cocomplete, it follows that $T$ is partial silting in $\Loc(T)$. But $T$ generates $\Loc(T)$ showing, by Lemma \ref{lemma NSZ}, that $T$ is silting in $\Loc(T)$. Thus, we have (1)$\Rightarrow$(2). 

Let us now prove (2)$\Rightarrow$(3). Consider the recollement induced by $\mathsf{Loc}(T)$, as in Proposition \ref{recollement}. 
Since, by assumption, we have $\Ucal_T=T^{\perp_{>0}}\cap \mathsf{Loc}(T)$, and, by Lemma \ref{lemma NSZ}, we have $T^{\perp_{>0}}=\Ucal_T\ast T^{\perp_{\mathbb{Z}}}$, we see that $T^{\perp_{>0}}$ is precisely the aisle of the t-structure in $\Tcal$ obtained by gluing the t-structures $(T^{\perp_{\mathbb{Z}}},0)$ in $T^{\perp_{\mathbb{Z}}}$ and $(T^{\perp_{>0}}\cap \mathsf{Loc}(T),T^{\perp_{\leq 0}}\cap \mathsf{Loc}(T))$ in $\mathsf{Loc}(T)$ along this recollement (for details on gluing t-structures, we refer to \cite[Th\'eor\`eme 1.4.10]{BBD}). 

The implication (3)$\Rightarrow$(1) follows from Remark \ref{well gen partial}.
\end{proof}

\begin{remark}\label{Remark silting}
\hfill
\begin{enumerate}
\item Note that whenever an object $T$ is silting in a smashing subcategory $\Lcal$ of a well generated triangulated category $\Tcal$, we have $\Lcal=\Loc(T)$. Indeed, $\Loc(T)$ is contained in $\Lcal$, hence $(\mathsf{Loc}(T),T^{\perp_{\mathbb{Z}}}\cap\Lcal)$ is a torsion pair in $\Lcal$. Since $T$ is silting in $\Lcal$, using Lemma \ref{lemma NSZ}, we conclude that the right hand side class of this torsion pair is trivial so that $\Lcal=\Loc(T)$, as wanted.
\item The t-structure associated with a silting object is always nondegenerate (see Definition \ref{def t-structure}). This shows that not all triangulated categories with coproducts admit silting objects, and, in particular, not all smashing subcategories of a given triangulated category $\Tcal$ with coproducts are of the form $\Loc(T)$ for a partial silting object $T$ in $\Tcal$. Take, for example, $\Tcal$ to be the stable category of not necessarily finitely generated modules over a representation-finite self-injective and finite dimensional algebra. By \cite{Dugas}, such an algebra is periodic and, thus, every object in $\Tcal$ is isomorphic to infinitely many of its positive (respectively, negative) shifts. As a consequence, there are no nondegenerate t-structures in $\Tcal$.
\end{enumerate}
\end{remark}

%%%%%%%%%%%%%%%%%%%%%%%%%%%%%%%%%%%%%%%%%%%%%%%%%%%%%%%%%%%%%%%%%%%%%

\subsection{Partial silting objects and TTF triples}
In this subsection, we establish a bijection between equivalence classes of partial silting objects and certain TTF triples in $\Tcal$. We will make use of the following lemma.

\begin{lemma}\label{main lemma}
Let $(\Ucal,\Vcal,\Wcal)$ be a suspended TTF triple in $\Tcal$ and let $\Ccal=\Ucal[1]\cap\Vcal$ be the coheart of the co-t-structure $(\Ucal,\Vcal)$. Then the following are equivalent.
\begin{enumerate}
\item $\Ccal^{\perp_{\mathbb{Z}}}=\Ucal^{\perp_{\mathbb{Z}}}$;
\item $\Vcal=\Ccal^{\perp_{>0}}$;
\item the t-structure $(\Vcal,\Wcal)$ is right nondegenerate.
\end{enumerate}
Moreover, if $\Vcal=\Scal^\perp$ for a set of objects $\Scal$ in $\Tcal$, then there is an object $X$ in $\Ccal$ such that $\Ccal=\mathsf{Add}(X)$. 
\end{lemma}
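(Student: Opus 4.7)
The plan is to establish the three equivalences via the short cycle (1) $\Leftrightarrow$ (2), (2) $\Rightarrow$ (3), (3) $\Rightarrow$ (2), driven by the long exact sequences coming from the $(\Vcal,\Wcal)$-truncations, and then to produce the object $X$ in the moreover part by applying the $(\Ucal,\Vcal)$-decomposition to the shifts $S[1]$ of the generators in $\Scal$. A preliminary observation used throughout is that, because $\Vcal$ is suspended, both $\Ucal={}^\perp\Vcal$ and $\Wcal=\Vcal^\perp$ are closed under $[-1]$, so $\bigcap_{n\in\mathbb{Z}}\Wcal[n]=\bigcap_{n\geq 0}\Wcal[-n]$, and the inclusion $\Ccal[-k]\subseteq\Ucal[1-k]\subseteq\Ucal$ for $k\geq 1$ yields $\Hom(\Ccal,\Vcal[k])=0$ for $k>0$ and $\Hom(\Ccal,\Wcal[k])=0$ for $k\leq 0$. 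A direct long exact sequence check then shows that for every $Y\in\Ccal^{\perp_{>0}}$ with $(\Vcal,\Wcal)$-truncation $V\to Y\to W\to V[1]$, both $V$ and $W$ belong to $\Ccal^{\perp_{\mathbb{Z}}}$.

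The easy implications follow quickly. For (2) $\Rightarrow$ (1), $\Ucal^{\perp_{\mathbb{Z}}}\subseteq\Ccal^{\perp_{\mathbb{Z}}}$ is immediate from $\Ccal\subseteq\Ucal[1]$; conversely, for $Y\in\Ccal^{\perp_{\mathbb{Z}}}$, shift-invariance combined with $\Vcal=\Ccal^{\perp_{>0}}$ gives $Y[k]\in\Vcal$ for every $k\in\mathbb{Z}$, hence $Y\in\bigcap_k\Vcal[k]=\Ucal^{\perp_{\mathbb{Z}}}$. For (1) $\Rightarrow$ (2), given $Y\in\Ccal^{\perp_{>0}}$ the preliminary yields $W\in\Wcal\cap\Ccal^{\perp_{\mathbb{Z}}}$, and (1) upgrades this to $W\in\Ucal^{\perp_{\mathbb{Z}}}\subseteq\Vcal$, so $W\in\Vcal\cap\Wcal=0$ and $Y\cong V\in\Vcal$. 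For (2) $\Rightarrow$ (3), any $Z\in\bigcap_n\Wcal[n]$ satisfies $\Hom(\Ccal,Z[k])\subseteq\Hom(\Vcal,Z[k])=0$ for every $k$, hence $Z\in\Ccal^{\perp_{>0}}=\Vcal$ and $Z\in\Vcal\cap\Wcal=0$.

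The subtle direction is (3) $\Rightarrow$ (2). Given $Y\in\Ccal^{\perp_{>0}}$ with truncation $V\to Y\to W$, the preliminary gives $W\in\Wcal\cap\Ccal^{\perp_{\mathbb{Z}}}$. Since $\bigcap_n\Wcal[n]=0$, it suffices to prove $W[n]\in\Wcal$ for every $n\geq 0$, and by induction this reduces to the sub-claim: $Z\in\Wcal\cap\Ccal^{\perp_{\mathbb{Z}}}$ implies $Z[1]\in\Wcal$. Truncate $Z[1]$ as $A\to Z[1]\to B\to A[1]$ with $A\in\Vcal$, $B\in\Wcal$; the LES yields $A\in\Vcal\cap\Ccal^{\perp_{\mathbb{Z}}}$, and rotating the triangle shifted by $-1$ to $A[-1]\to Z\to B[-1]\to A$ together with closure of $\Wcal$ under $[-1]$ and extensions places $A[-1]\in\Wcal$, so $A$ belongs to the heart $\Hcal=\Vcal\cap\Wcal[1]$. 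Iterating this procedure on positive shifts of $A$ and using the cohomology of the t-structure eventually pushes $A$ into $\bigcap_n\Wcal[n]=0$; hence $A=0$ and $Z[1]\cong B\in\Wcal$.

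For the moreover part, $\Vcal=\Scal^\perp$ implies $\Scal\subseteq{}^\perp\Vcal=\Ucal$. For each $S\in\Scal$, decompose $S[1]$ via $(\Ucal,\Vcal)$: $U_S\to S[1]\to X_S\to U_S[1]$ with $U_S\in\Ucal$, $X_S\in\Vcal$. Applying $\Hom(-,V)$ for $V\in\Vcal$ to the rotated triangle $X_S[-1]\to U_S\to S[1]\to X_S$ and using $\Hom(S,V)=\Hom(U_S,V)=0$ yields $\Hom(X_S[-1],V)=0$; thus $X_S[-1]\in\Ucal$ and $X_S\in\Ccal$. Setting $X:=\coprod_{S\in\Scal}X_S\in\Ccal$, the crucial vanishing is: if $C\in\Ccal$ satisfies $\Hom(X,C)=0$, then the LES applied to $U_S\to S[1]\to X_S$, combined with $\Hom(U_S,C)=0$, forces $\Hom(S,C[-1])=0$ for every $S\in\Scal$; hence $C[-1]\in\Scal^\perp=\Vcal$, and combined with $C[-1]\in\Ucal$ this gives $C[-1]\in\Ucal\cap\Vcal=0$ and so $C=0$. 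A standard Bongartz-type argument then yields $\Ccal=\mathsf{Add}(X)$. The main obstacle across the proof is the iteration inside (3) $\Rightarrow$ (2): verifying that the heart object $A$ indeed collapses into $\bigcap_n\Wcal[n]$ requires careful bookkeeping of how $\Ccal$-orthogonality propagates under repeated $(\Vcal,\Wcal)$-truncations of shifts, which is precisely where the right nondegeneracy hypothesis does its essential work.
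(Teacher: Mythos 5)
Your handling of the easy implications is correct (one small overstatement in the preliminary: for $Y\in\Ccal^{\perp_{>0}}$ with truncation $V\to Y\to W$, only $W$, not $V$, lands in $\Ccal^{\perp_{\mathbb Z}}$ --- take $Y\in\Ccal$ itself as a counterexample --- but you never use the claim about $V$, so this is harmless). Your plan to prove $(3)\Rightarrow(2)$ rather than the paper's $(3)\Rightarrow(1)$ is a legitimate variant, and the reduction to the sub-claim $Z\in\Wcal\cap\Ccal^{\perp_{\mathbb Z}}\Rightarrow Z[1]\in\Wcal$, and further to showing that the heart object $A\in\Hcal\cap\Ccal^{\perp_{\mathbb Z}}$ vanishes, is sound.

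The gap is exactly at the point you flag as the main obstacle. The phrase ``iterating this procedure on positive shifts of $A$'' is circular: applying the procedure means applying the sub-claim to $A[-1]\in\Wcal\cap\Ccal^{\perp_{\mathbb Z}}$, but the sub-claim is precisely what you are trying to establish, and truncating $A$ by $(\Vcal,\Wcal)$ just returns $A$ itself since $A\in\Vcal$. Invoking ``the cohomology of the t-structure'' does not supply the missing step. What does close the argument is the $(\Ucal,\Vcal)$-structure, which your sketch never touches at this point: truncate $A$ with respect to the torsion pair $(\Ucal[1],\Vcal[1])$, giving $U\to A\to V'\to U[1]$ with $U\in\Ucal[1]$, $V'\in\Vcal[1]$. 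Rotating to $V'[-1]\to U\to A\to V'$ and using that $\Vcal$ is closed under extensions shows $U\in\Ucal[1]\cap\Vcal=\Ccal$; then $\Hom(\Ccal,A)=0$ forces the first map of the triangle to be zero, so $A$ is a summand of $V'\in\Vcal[1]$; combined with $A\in\Wcal[1]$ this gives $A\in(\Vcal\cap\Wcal)[1]=0$. This is in essence where the paper's argument (via the right-minimality of the $(\Vcal[k],\Wcal[k])$-truncation morphism and a split monomorphism coming from the $(\Ucal[k+1],\Vcal[k+1])$-decomposition) also does its real work; any route through $(3)$ must bring the co-t-structure into play at this stage.

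The moreover part has a similar issue. Your construction of $X=\coprod_S X_S$ and the ``crucial vanishing'' (if $C\in\Ccal$ and $\Hom(X,C)=0$ then $C=0$) are both correct, but that vanishing statement alone does not yield $\Ccal=\mathsf{Add}(X)$ by a ``standard Bongartz-type argument.'' If you take a right $\mathsf{Add}(X)$-approximation $f\colon X'\to C$ with triangle $K\to X'\to C\xrightarrow{g} K[1]$, the crucial vanishing cannot be applied to $K[1]$ because you have no reason to believe $K[1]\in\Ccal$. What is actually needed is to show $K\in\Vcal$, i.e.\ $\Hom(S,K)=0$ for all $S\in\Scal$, and for this one uses that $h_S\colon S[1]\to X_S$ is a \emph{left $\Vcal$-approximation} (being a co-t-structure truncation map): any $S[1]\to C$ factors through $h_S$ and hence, via the summand inclusion $X_S\to X$ and the approximation $f$, lifts along $f$, so $\Hom(S[1],f)$ is surjective; combined with $\Hom(S[1],X'[1])=\Hom(S,X')=0$ this gives $\Hom(S[1],K[1])=\Hom(S,K)=0$, whence $K\in\Vcal$, $g=0$, and $C\in\mathsf{Add}(X)$. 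Your write-up replaces this approximation chain with the weaker orthogonality fact, and that substitution does not go through.
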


\begin{proof}
(1)$\Rightarrow$(2): It is clear that $\Vcal\subseteq \Ccal^{\perp_{>0}}$. We prove the reverse inclusion. Let $X$ be an object in $\Ccal^{\perp_{>0}}$ and consider the following truncation triangle of $X$  with respect to the t-structure $(\Vcal,\Wcal)$
$$v(X)\longrightarrow X\longrightarrow w(X)\longrightarrow v(X)[1].$$
Since all non-negative shifts of $\Ccal$ are contained in $\Vcal$, we have that $w(X)$ lies in $\Ccal^{\perp_{\leq 0}}$. On the other hand, since both $X$ and $v(X)$ lie in $\Ccal^{\perp_{>0}}$, so does $w(X)$. Therefore, $w(X)$ lies in $\Ccal^{\perp_{\mathbb{Z}}}$ which, by assumption, equals $\Ucal^{\perp_{\mathbb{Z}}}$. Therefore, $w(X)$ is, in particular, an object of $\Vcal$, showing that $w(X)=0$, as wanted.

(2)$\Rightarrow$(3): As above, we have that $\Wcal\subseteq \Ccal^{\perp_{\leq 0}}$. Hence, the intersection $\bigcap_{n\in\mathbb{Z}}\Wcal[n]$ is contained in $\Ccal^{\perp_{\mathbb{Z}}}$ which, by assumption, is also contained in $\Vcal$. Therefore, this intersection consists only of the zero object and the t-structure $(\Vcal,\Wcal)$ is right nondegenerate.

(3)$\Rightarrow$(1): We prove this statement along the lines of \cite[Lemma 4.6]{AMV3}. Since $\Ccal\subseteq \Ucal[1]$, it is clear that $\Ucal^{\perp_{\mathbb{Z}}}\subseteq \Ccal^{\perp_\mathbb{Z}}$. For the reverse inclusion, let $X$ be an object in $\Ccal^{\perp_{\mathbb{Z}}}$ and consider, for each $k$ in $\mathbb{Z}$, the truncation triangle of $X$ with respect to the t-structure $(\Vcal[k],\Wcal[k])$
$$\xymatrix{v^k(X)\ar[r]^{a^k}& X\ar[r]& w^k(X)\ar[r]& v^k(X)[1]},$$
where $v^k(X)$ lies in $\Vcal[k]$ and $w^k(X)$ lies in $\Wcal[k]$. For each $v^k(X)$, consider a truncation triangle for the co-t-structure $(\Ucal[k+1],\Vcal[k+1])$
$$\xymatrix{U_{k+1}^X\ar[r]^{\alpha_k}& v^k(X)\ar[r]^{\beta_k}& V_{k+1}^X\ar[r]& U_{k+1}^X[1]}$$
First, observe that, since both $v^k(X)[1]$ and $V_{k+1}^X$ lie in $\Vcal[k+1]$, so does $U_{k+1}^X[1]$. Hence, we have that $U_{k+1}^X$ lies in $\Ccal[k]$, showing that $a^k\alpha_k=0$, by our assumption on $X$. Thus, there is a map $f_k:V_{k+1}^X\longrightarrow X$ such that $f_k\beta_k=a^k$. However, $f_k$ must also factor through $a^{k+1}\colon v^{k+1}(X)\longrightarrow X$, i.e. there is a map $g_k\colon V^X_{k+1}\longrightarrow v^{k+1}(X)$ such that $a^{k+1}g_k=f_k$. In summary, we get $a^{k+1}g_k\beta_k=a^k$.  Recall that, however, $a^{k+1}$ factors canonically through $a^k$ (since $\Vcal$ is suspended) and, therefore, since $a^k$ is a right minimal approximation, we conclude that $\beta_k$ is a split monomorphism. This shows that $v^k(X)$ lies in $\Vcal[k+1]$ and, therefore, the canonical map $v^{k+1}(X)\longrightarrow v^k(X)$ must be an isomorphism. This implies that also the canonical map $w^k(X)\longrightarrow w^{k+1}(X)$ is an isomorphism, for all $k$ in $\mathbb{Z}$. In particular, for any $k$, the object $w^k(X)$ lies in $\bigcap_{n\in\mathbb{Z}}\Wcal[n]=0$ and, thus, $X$ lies in $\bigcap_{k\in\mathbb{Z}}\Vcal[k]=\Ucal^{\perp_{\mathbb{Z}}}$.

To prove the final assertion, we now assume that $\Vcal=\Scal^\perp$ for a set of objects $\Scal$ in $\Tcal$.
We claim that $\Ccal=\mathsf{Add}(X)$, for some object $X$ in $\Ccal$. Indeed, for each object $S$ in $\Scal$, consider a truncation triangle of $S[1]$ with respect to the co-t-structure $(\Ucal,\Vcal)$
$$\xymatrix{U_S\ar[r]& S[1]\ar[r]^{h_S}& V_S\ar[r]& U_S[1]}.$$
Since $S[1]$ and $U_S[1]$ lie in $\Ucal[1]$, so does $V_S$ and, therefore, $V_S$ lies in $\Ccal$. We consider the object $X:=\coprod_{S\in\Scal}V_S$ and we show that $\mathsf{Add}(X)=\Ccal$. It is clear that $\mathsf{Add}(X)\subseteq \Ccal$. Now, for any object $C$ in $\Ccal$, we consider a right $\mathsf{Add}(X)$-approximation $f\colon X^\prime\longrightarrow C$ and we complete it to a triangle
$$\xymatrix{K\ar[r]&X^\prime\ar[r]^f&C\ar[r]^g&K[1]}.$$
Given $S$ in $\Scal$, since $h_S$ is a left $\Vcal$-approximation and $f$ is a right $\mathsf{Add}(X)$-approximation, we have that $\Hom_\Tcal(S[1],f)$ is surjective. Moreover, we have $\Hom_\Tcal(S[1],X^\prime[1])=0$, and thus we conclude that $\Hom_\Tcal(S[1],K[1])=0$, proving that $K$ lies in $\Vcal$. This shows that $g=0$ and $f$ splits, proving our claim.
\end{proof}

We now prove that partial silting objects in sufficiently nice triangulated categories yield TTF triples. Analogous results for silting complexes in derived categories of rings appeared in \cite[Theorem 4.6]{AMV1}.

\begin{proposition}\label{TTF partial silting}
Let $\Tcal$ be a well generated triangulated category satisfying dual Brown representability. If $T$ is a partial silting object in $\Tcal$, then $({}^\perp(T^{\perp_{>0}}),T^{\perp_{>0}},(T^{\perp_{>0}})^\perp)$ is a suspended TTF triple.
\end{proposition}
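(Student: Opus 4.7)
The candidate TTF triple decomposes into two torsion pairs, $({}^\perp(T^{\perp_{>0}}),T^{\perp_{>0}})$ and $(T^{\perp_{>0}},(T^{\perp_{>0}})^\perp)$. First, observe that $T^{\perp_{>0}}$ is suspended: it is closed under positive shifts directly from its definition, and closed under extensions since $\Hom_\Tcal(T,-)$ is cohomological. By Proposition \ref{partial silting def}(3), $T^{\perp_{>0}}$ is in fact an aisle of $\Tcal$, so the pair $(T^{\perp_{>0}},(T^{\perp_{>0}})^\perp)$ is immediately a t-structure; this yields the second torsion pair of the candidate TTF for free, and the ``suspended'' property of the triple follows directly from the suspension of the middle class.

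The main content of the proof lies in establishing the first torsion pair $({}^\perp(T^{\perp_{>0}}),T^{\perp_{>0}})$. This amounts to producing, for each $X\in\Tcal$, a triangle $U\to X\to V\to U[1]$ with $V\in T^{\perp_{>0}}$; the condition $U\in{}^\perp(T^{\perp_{>0}})$ is then automatic from the torsion pair axioms. Equivalently, the inclusion $\iota\colon T^{\perp_{>0}}\hookrightarrow\Tcal$ must admit a left adjoint. We record the structural properties of $T^{\perp_{>0}}$ to be exploited: it is closed under products (as $\Hom_\Tcal(T,-)$ preserves them), cocomplete by PS3, closed under extensions, and by Lemma \ref{lemma NSZ} decomposes as $\Ucal_T\ast T^{\perp_{\mathbb{Z}}}$. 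Furthermore, by Proposition \ref{partial silting def}(2) the subcategory $\Loc(T)$ is smashing in $\Tcal$, providing the smashing TTF triple $(\Loc(T),T^{\perp_{\mathbb{Z}}},(T^{\perp_{\mathbb{Z}}})^\perp)$, which we use to control behaviour outside $T^{\perp_{>0}}$.

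The construction of the left adjoint proceeds via dual Brown representability. For each $X\in\Tcal$ we build a covariant product-preserving cohomological functor $F_X\colon\Tcal\to\mathsf{Mod}$-$\Z$ whose restriction to $T^{\perp_{>0}}$ computes $V\mapsto\Hom_\Tcal(X,V)$. A natural candidate exploits the coreflection $\tau\colon\Tcal\to T^{\perp_{>0}}$ guaranteed by Proposition \ref{partial silting def}(3); product-preservation is then immediate. The main technical obstacle is that $\tau$ is not \emph{a priori} a triangulated functor (since $T^{\perp_{>0}}$ is only suspended, not triangulated), so one must combine $\tau$ with the smashing decomposition of $\Loc(T)$ and the decomposition $T^{\perp_{>0}}=\Ucal_T\ast T^{\perp_{\mathbb{Z}}}$ to verify that $F_X$ sends triangles of $\Tcal$ to long exact sequences. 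Once this cohomologicality is established, dual Brown representability yields $\lambda(X)\in T^{\perp_{>0}}$ representing $F_X$, and the adjunction unit $X\to\lambda(X)$ produces the required approximation triangle.
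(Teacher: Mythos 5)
Your treatment of the second torsion pair $(T^{\perp_{>0}},(T^{\perp_{>0}})^\perp)$ and of the suspendedness of the middle class is fine and matches the paper's opening step via Proposition~\ref{partial silting def}. The gap is in the second half. You correctly identify that everything reduces to showing the inclusion $\iota\colon T^{\perp_{>0}}\hookrightarrow\Tcal$ is reflective, and you correctly flag the obstruction: the truncation $\tau$ is not a triangulated functor since $T^{\perp_{>0}}$ is only suspended. But you never actually define $F_X$ on objects outside $T^{\perp_{>0}}$, and the proposed remedy --- ``combine $\tau$ with the smashing decomposition of $\Loc(T)$ and the decomposition $T^{\perp_{>0}}=\Ucal_T\ast T^{\perp_\Z}$'' to force cohomologicality --- is not a construction. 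Any naive candidate such as $F_X=\Hom_\Tcal(X,\tau(-))$ simply fails to be cohomological because $\tau$ does not preserve triangles, and neither the decomposition from Lemma~\ref{lemma NSZ} nor the recollement for $\Loc(T)$ repairs this: cohomologicality concerns how $F_X$ treats arbitrary triangles of $\Tcal$, most of which have nothing to do with $T^{\perp_{>0}}$. You also leave unaddressed why the representing object of $F_X$, once produced, would actually lie in $T^{\perp_{>0}}$. In short, the step you yourself call the ``main technical obstacle'' is exactly where the proof stops.

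The paper resolves this by a different mechanism that supplies the missing structural input. Rather than constructing a reflection by hand, it invokes \cite[Theorem 3.2.4]{Bondarko}, which produces a co-t-structure adjacent to the t-structure $(T^{\perp_{>0}},\Wcal)$ under three hypotheses: $T^{\perp_{>0}}$ is complete; the heart $\Hcal$ of the t-structure has enough projective objects; and $\Loc(T^{\perp_{>0}})$ satisfies dual Brown representability. Dual Brown representability is thus only one of three hypotheses, and the substantive one --- which your argument lacks any analogue of --- is the projectivity of the heart. The paper obtains it from the gluing description in Proposition~\ref{partial silting def}: the t-structure on $\Tcal$ is glued from the trivial t-structure on $T^{\perp_\Z}$ and the silting t-structure $(T^{\perp_{>0}}\cap\Loc(T),T^{\perp_{\leq 0}}\cap\Loc(T))$ on $\Loc(T)$, so $\Hcal$ is equivalent to the heart of the silting t-structure, which has enough projectives by \cite[Proposition 4.6]{PV}. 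It is these projectives that organize the cohomological functor underlying Bondarko's argument; dual Brown representability alone, applied to an unspecified $F_X$, does not substitute for them. (The paper also checks $\Loc(T^{\perp_{>0}})=\Tcal$, so Bondarko's third hypothesis reduces to the ambient dual Brown representability --- a small point you would need as well.)
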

\begin{proof}
By Proposition \ref{partial silting def}, there is a t-structure $(T^{\perp_{>0}},\Wcal)$ in $\Tcal$. By \cite[Theorem 3.2.4]{Bondarko} there is a co-t-structure of the form $(\Ucal,T^{\perp_{>0}})$ provided that $T^{\perp_{>0}}$ is complete (which is obvious), the heart of $(T^{\perp_{>0}},\Wcal)$ is an abelian category with enough projectives, and $\Loc(T^{\perp_{>0}})$, the smallest localising subcategory containing $T^{\perp_{>0}}$, satisfies dual Brown representability.

Recall from \cite{BBD} that the heart of any t-structure forms an abelian category. First, we check that the heart $\Hcal=T^{\perp_{>0}}\cap \Wcal[1]$ of the t-structure $(T^{\perp_{>0}},\Wcal)$ has enough projective objects. Indeed, recall that this t-structure is obtained in Proposition \ref{partial silting def} by gluing the trivial t-structure $(T^{\perp_{\mathbb{Z}}},0)$ in $T^{\perp_{\mathbb{Z}}}$ with the silting t-structure $(T^{\perp_{>0}}\cap \mathsf{Loc}(T),T^{\perp_{\leq 0}}\cap\mathsf{Loc}(T))$ in $\Loc(T)$. By \cite[Proposition 4.6]{PV}, the heart $\Acal$ of the latter t-structure has enough projective objects. Since gluing t-structures induces a recollement of hearts (\cite[Section 1.4]{BBD}), and since the heart of the t-structure $(T^{\perp_{\mathbb{Z}}},0)$ is trivial, we conclude that $\Hcal\cong \Acal$ is an abelian category with enough projectives.

Secondly, it suffices to observe that $\mathsf{Loc}(T^{\perp_{>0}})=\Tcal$. Indeed, note that both $\mathsf{Loc}(T)$ and $T^{\perp_{\mathbb{Z}}}$ are contained in $\mathsf{Loc}(T^{\perp_{>0}})$ and, thus, so is $\Tcal=\mathsf{Loc}(T)\ast T^{\perp_{\mathbb{Z}}}$ (see Theorem \ref{existence of adjoints}). This finishes our proof.
\end{proof}

The following theorem establishes the bijection between partial silting objects and certain TTF triples.

\begin{theorem}\label{bij partial silting}
Let $\Tcal$ be a well generated triangulated category satisfying dual Brown representability. The assignment sending an object $T$ to the triple $({}^\perp(T^{\perp_{>0}}),T^{\perp_{>0}},(T^{\perp_{>0}})^\perp)$ yields a a bijection between
\begin{itemize}
\item Equivalence classes of partial silting objects in $\Tcal$;
\item Suspended TTF triples $(\Ucal,\Vcal,\Wcal)$ in $\Tcal$ such that $(\Ucal,\Vcal)$ is generated by a set of objects from $\Tcal$ and $(\Vcal,\Wcal)$ is right nondegenerate.
\end{itemize}
\end{theorem}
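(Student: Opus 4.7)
The plan is to establish the bijection via the coheart of the associated co-t-structure. The forward map is the one supplied by Proposition~\ref{TTF partial silting}; the inverse will send a triple $(\Ucal,\Vcal,\Wcal)$ to any object $X$ with $\mathsf{Add}(X) = \Ucal[1]\cap\Vcal$, whose existence follows from Lemma~\ref{main lemma}. The hard step will be the coheart identity $\Ucal[1]\cap\Vcal = \mathsf{Add}(T)$ for $T$ partial silting, after which both compositions become the identity on equivalence classes.

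To check that the forward map is well-defined, Proposition~\ref{TTF partial silting} supplies a suspended TTF triple with $\Vcal = T^{\perp_{>0}}$. Since $T^{\perp_{>0}} = \{T[n] : n < 0\}^\perp$, the pair $(\Ucal,\Vcal)$ is generated by the set $\{T[n] : n < 0\}$. For right nondegeneracy of $(\Vcal,\Wcal)$ I would apply Lemma~\ref{main lemma} and verify the equivalent identity $\Vcal = \Ccal^{\perp_{>0}}$, where $\Ccal := \Ucal[1]\cap\Vcal$. First, $T\in\Ccal$: one has $T\in T^{\perp_{>0}} = \Vcal$ from partial silting, and $\Hom(T[-1], Y) = \Hom(T, Y[1]) = 0$ for every $Y\in\Vcal$ puts $T[-1]\in{}^\perp\Vcal = \Ucal$. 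Hence $\Ccal^{\perp_{>0}}\subseteq T^{\perp_{>0}} = \Vcal$. For the reverse inclusion, if $C\in\Ccal$, $Y\in\Vcal$ and $n\geq 1$, then $\Hom(C, Y[n]) = \Hom(C[-1], Y[n-1]) = 0$, using $C[-1]\in\Ucal$ and $Y[n-1]\in\Vcal$ by suspension. Well-definedness on equivalence classes is automatic, since $T^{\perp_{>0}}$ depends only on $\mathsf{Add}(T)$.

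The crux is the coheart identity $\Ccal = \mathsf{Add}(T)$, and this is the main obstacle. The inclusion $\mathsf{Add}(T)\subseteq\Ccal$ is immediate from closure of $\Ccal$ under summands and coproducts. For the reverse, take $C\in\Ccal$, set $I := \Hom_\Tcal(T,C)$, let $f\dd T^{(I)}\to C$ be the canonical evaluation map, and complete to a triangle
\[K\longrightarrow T^{(I)}\xrightarrow{f} C\xrightarrow{g} K[1].\]
The key observation that bypasses the non-compactness of $T$ is that $T\in\Vcal$ and $\Vcal$ is cocomplete (as the left class of the torsion pair $(\Vcal,\Wcal)$), so $T^{(I)}\in\Vcal = T^{\perp_{>0}}$ and hence $\Hom(T, T^{(I)}[n]) = 0$ for all $n > 0$. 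Combining this with $\Hom(T, C[n]) = 0$ for $n\geq 1$ and the surjectivity of $\Hom(T, f)$, the long exact sequence of $\Hom(T,-)$ yields $\Hom(T, K[n]) = 0$ for all $n\geq 1$, i.e.\ $K\in\Vcal$. Then $\Hom(C, K[1]) = \Hom(C[-1], K) = 0$ from $C[-1]\in\Ucal$ and $K\in\Vcal$, forcing $g = 0$, so the triangle splits and $C$ is a summand of $T^{(I)}$, giving $C\in\mathsf{Add}(T)$.

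For the inverse, given a triple $(\Ucal,\Vcal,\Wcal)$ as in the statement, Lemma~\ref{main lemma} produces $X\in\Ccal$ with $\mathsf{Add}(X) = \Ccal$, and right nondegeneracy gives $X^{\perp_{>0}} = \Ccal^{\perp_{>0}} = \Vcal$. Since $\Vcal$ is cocomplete and $X\in\Ccal\subseteq\Vcal = X^{\perp_{>0}}$, Remark~\ref{well gen partial} declares $X$ partial silting, and the forward map sends $X$ back to $({}^\perp\Vcal, \Vcal, \Vcal^\perp) = (\Ucal,\Vcal,\Wcal)$. Conversely, a partial silting $T$ is sent to its TTF triple, whose coheart is $\mathsf{Add}(T)$ by the previous paragraph; the inverse then returns any $X$ with $\mathsf{Add}(X) = \mathsf{Add}(T)$, concluding the bijection.
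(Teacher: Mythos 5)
Your proof is correct and follows essentially the same strategy as the paper's: Proposition~\ref{TTF partial silting} for well-definedness, the coheart identity $\Ccal = \mathsf{Add}(T)$ for injectivity, and Lemma~\ref{main lemma} together with the characterisation of partial silting objects (Remark~\ref{well gen partial}/Proposition~\ref{partial silting def}) for surjectivity. The only small deviations are that you verify right nondegeneracy of the image via condition~(2) of Lemma~\ref{main lemma} rather than via $\mathsf{Loc}(T^{\perp_{>0}})=\Tcal$, and you spell out the coheart computation directly with the canonical evaluation map $T^{(\Hom_\Tcal(T,C))}\to C$ instead of citing the proof of the last assertion of Lemma~\ref{main lemma} --- both are the same calculation in a different guise.
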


\begin{proof}
By Proposition \ref{TTF partial silting}, the triple $({}^\perp(T^{\perp_{>0}}),T^{\perp_{>0}},(T^{\perp_{>0}})^\perp)$ is a suspended TTF triple in $\Tcal$, and the co-t-structure $(\Ucal:={}^\perp(T^{\perp_{>0}}),T^{\perp_{>0}})$ is generated by a set of objects, namely the negative shifts of $T$. It remains to check that the t-structure $(T^{\perp_{>0}},\Wcal:=(T^{\perp_{>0}})^\perp)$
is right nondegenerate to guarantee that the assignment is well-defined. But from the proof of Proposition \ref{TTF partial silting}, we know that $\mathsf{Loc}(T^{\perp_{>0}})=\Tcal$, showing that $\bigcap_{n\in\mathbb{Z}}\Wcal[n]=\Loc(T^{\perp_{>0}})^\perp=\Tcal^\perp=0$, as wanted.

To prove that the assignment is injective, we observe that the coheart $\Ccal=\Ucal[1]\cap T^{\perp_{>0}}$ coincides with $\mathsf{Add}(T)$. Indeed, it is clear that $\mathsf{Add}(T)$ is contained in $\Ccal$. The reverse inclusion follows as in the proof of the last statement of Lemma \ref{main lemma}.
 
Finally, if $(\Ucal,\Vcal,\Wcal)$ is a TTF triple as in the statement above and $\Ccal$ denotes the coheart of the co-t-structure $(\Ucal,\Vcal)$, then it follows from Lemma \ref{main lemma} that $\Vcal=\Ccal^{\perp_{>0}}$ and, moreover, since $(\Ucal,\Vcal)$ is generated by a set of objects, $\Ccal=\mathsf{Add}(X)$ for an object $X$ in $\Tcal$. It remains to show that $X$ is partial silting in $\Tcal$. However, this is clear from Proposition \ref{partial silting def} since $X^{\perp_{>0}}=\Vcal$ is an aisle in $\Tcal$ containing $X$. 
\end{proof}

\begin{corollary}\label{bij silting}
Let $\Tcal$ be a well generated triangulated category satisfying dual Brown representability. The assignment sending an object $T$ to the triple $({}^\perp(T^{\perp_{>0}}),T^{\perp_{>0}},(T^{\perp_{>0}})^\perp)$ yields a a bijection between
\begin{itemize}
\item Equivalence classes of silting objects in $\Tcal$;
\item Suspended TTF triples $(\Ucal,\Vcal,\Wcal)$ in $\Tcal$ such that $(\Ucal,\Vcal)$ is generated by a set of objects from $\Tcal$ and $(\Vcal,\Wcal)$ is nondegenerate.
\end{itemize}
\end{corollary}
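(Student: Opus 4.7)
My plan is to deduce Corollary \ref{bij silting} as a straightforward refinement of Theorem \ref{bij partial silting}, since the only thing that distinguishes a silting object from a partial silting object is the additional condition $T^{\perp_{\mathbb{Z}}}=0$ (noted right after Lemma \ref{lemma NSZ}), and on the TTF side the only difference is that we now additionally demand left nondegeneracy of $(\Vcal,\Wcal)$. So everything reduces to matching these two extra conditions under the assignment $T\mapsto ({}^\perp(T^{\perp_{>0}}),T^{\perp_{>0}},(T^{\perp_{>0}})^\perp)$.

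Concretely, I would argue as follows. Since $\Tcal$ is well generated and satisfies dual Brown representability, Theorem \ref{bij partial silting} already gives a bijection between equivalence classes of partial silting objects and suspended TTF triples $(\Ucal,\Vcal,\Wcal)$ such that $(\Ucal,\Vcal)$ is generated by a set and $(\Vcal,\Wcal)$ is right nondegenerate. So it suffices to observe that, under this bijection, a partial silting $T$ is silting if and only if the associated t-structure $(\Vcal,\Wcal)=(T^{\perp_{>0}},(T^{\perp_{>0}})^\perp)$ is left nondegenerate. The key calculation is that
$$\bigcap_{n\in\mathbb{Z}}\Vcal[n]=\bigcap_{n\in\mathbb{Z}}T^{\perp_{>0}}[n]=T^{\perp_{\mathbb{Z}}},$$
because an object $Y$ lies in $T^{\perp_{>0}}[n]$ precisely when $\Hom_{\Tcal}(T,Y[m])=0$ for all $m>-n$, and intersecting over $n\in\mathbb{Z}$ yields exactly $T^{\perp_{\mathbb{Z}}}$. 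Combined with the fact (Lemma \ref{lemma NSZ} together with the remark following it) that a partial silting object is silting iff $T^{\perp_{\mathbb{Z}}}=0$, this identifies ``$T$ silting'' with ``$(\Vcal,\Wcal)$ left nondegenerate''.

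Conversely, starting from a suspended TTF triple $(\Ucal,\Vcal,\Wcal)$ with $(\Ucal,\Vcal)$ generated by a set and $(\Vcal,\Wcal)$ nondegenerate, Theorem \ref{bij partial silting} already produces a partial silting object $X$ with $\mathsf{Add}(X)=\Ccal$ (the coheart) and $X^{\perp_{>0}}=\Vcal$. Applying the above calculation in reverse, left nondegeneracy of $(\Vcal,\Wcal)$ forces $X^{\perp_{\mathbb{Z}}}=\bigcap_n\Vcal[n]=0$, whence $X$ is silting. Thus the restriction of the bijection of Theorem \ref{bij partial silting} to this narrower class on each side is again a bijection.

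I do not anticipate any genuine obstacle here: the entire content of the corollary sits in the identity $\bigcap_n T^{\perp_{>0}}[n]=T^{\perp_{\mathbb{Z}}}$ and in recognising that the additional ``silting vs.\ partial silting'' condition translates cleanly into left nondegeneracy. The only mild subtlety is to be careful with the shift conventions when computing $\bigcap_n \Vcal[n]$, but no new triangulated or approximation-theoretic input beyond what is already used in the proof of Theorem \ref{bij partial silting} is required.
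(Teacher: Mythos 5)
Your argument is correct and matches the paper's proof: both deduce the corollary from Theorem \ref{bij partial silting} by observing that $\bigcap_{n\in\mathbb{Z}}T^{\perp_{>0}}[n]=T^{\perp_{\mathbb{Z}}}$, and that a partial silting $T$ is silting precisely when $T^{\perp_{\mathbb{Z}}}=0$ (via Lemma \ref{lemma NSZ}), which is exactly left nondegeneracy of $(\Vcal,\Wcal)$. Your version spells out the shift calculation and the back-and-forth slightly more explicitly, but it is the same argument.
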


\begin{proof}
By Theorem \ref{bij partial silting}, it is enough to check that a partial silting object $T$ in $\Tcal$ is silting if and only if $\bigcap_{n\in\mathbb{Z}}T^{\perp_{>0}}[n]=T^{\perp_{\mathbb{Z}}}=0$. But this follows from Lemma \ref{lemma NSZ}, as discussed in the previous subsection. 
\end{proof}

%%%%%%%%%%%%%%%%%%%%%%%%%%%%%%%%%%%%%%%%%%%%%%%%%%%%%%%%%%%%%%%%%%%%

\section{Compactly generated localising subcategories}
We will prove that, for a large class of triangulated categories $\Tcal$, compactly generated localising subcategories are generated by a partial silting object from $\Tcal$. In Subsection 4.1, we start with the case of localising subcategories of derived module categories which are generated by 2-term complexes of finitely generated projective modules. Here, we can use recent work from \cite{MS} to obtain the statement claimed above. This subsection is meant to serve as a motivation for the general case, which will be discussed afterwards. In fact, in order to prove our main theorem in Subsection \ref{Subsection general case}, we will  use different techniques that mostly rely on the results of Section \ref{Section silting}.

%%%%%%%%%%%%%%%%%%%%%%%%%%%%%%%%%%%%%%%%%%%%%%%%%%%%%%%%%%%%%%%%%%%%

\subsection{The case of 2-term complexes}
In this subsection, $R$ will be a (unital) ring and $\D(R)=\D(\rmod{R})$ will denote the unbounded derived category of all right $R$-modules. Moreover, let $\Sigma$ be a set of maps between finitely generated projective $R$-modules, which we naturally view as a set of compact objects in $\D(R)$. We will be particularly interested in the smashing subcategory $\Loc(\Sigma)$ of $\D(R)$. First, we claim that the subcategory $\Loc(\Sigma)^\perp$ is cohomologically determined. To make this precise, recall that to every set $\Sigma$ like above, we can associate a ring epimorphism $R\longrightarrow R_\Sigma$, called the universal localisation of $R$ at $\Sigma$ (see \cite[Chapter 4]{Scho}). In this case, the category of modules over $R_\Sigma$ identifies via restriction with the full subcategory $\Ycal_\Sigma$ of all $R$-modules $X$ satisfying that $\Hom_R(\sigma,X)$ is an isomorphism for all $\sigma$ in $\Sigma$. The following result was proved in \cite[Proposition 3.3(3)]{CX}, and can also be deduced from the arguments used in the proof of \cite[Lemma 4.2]{AKL1}.

\begin{proposition}\label{prop CX}
With the notation above, we have that $\Loc(\Sigma)^\perp$ coincides with the subcategory of complexes in $\D(R)$ whose cohomologies lie in $\Ycal_\Sigma$.
\end{proposition}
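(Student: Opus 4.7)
The plan is a direct cohomological computation. Since every $\sigma \in \Sigma$ is compact in $\D(R)$, we have $\Loc(\Sigma)^\perp = \Sigma^{\perp_{\Z}}$, so $X$ lies in $\Loc(\Sigma)^\perp$ if and only if $\Hom_{\D(R)}(\sigma, X[n]) = 0$ for every $\sigma \in \Sigma$ and every $n \in \Z$. The goal is to translate this $\Hom$-vanishing condition into the claimed condition on each cohomology $H^n(X)$.

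Fix $\sigma \colon P_1 \to P_0$ in $\Sigma$ and, abusing notation, write $\sigma$ also for the associated 2-term complex. Then $\sigma$ is the mapping cone of the underlying map of projectives, giving a triangle
$$P_1 \longrightarrow P_0 \longrightarrow \sigma \longrightarrow P_1[1]$$
in $\D(R)$. Since each $P_i$ is a finitely generated projective module placed in degree $0$, it is its own projective resolution, and a standard computation yields $\Hom_{\D(R)}(P_i, X[n]) \cong \Hom_R(P_i, H^n(X))$. Applying $\Hom_{\D(R)}(-, X[n])$ to the triangle and using this identification, the resulting long exact sequence interleaves the groups $\Hom_{\D(R)}(\sigma, X[n])$ with the maps $\sigma^* \colon \Hom_R(P_0, H^n(X)) \to \Hom_R(P_1, H^n(X))$.

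A short analysis of this long exact sequence shows that $\Hom_{\D(R)}(\sigma, X[n])$ vanishes for every $n \in \Z$ if and only if $\sigma^*$ is an isomorphism in every cohomology degree. By the very definition of $\Ycal_\Sigma$, this is precisely the statement that each $H^n(X)$ lies in $\Ycal_\Sigma$. Quantifying over all $\sigma \in \Sigma$ then yields both inclusions of the proposition. There is no serious obstacle to this argument; the proof is essentially an unwinding of definitions via the mapping-cone triangle, with only the identification $\Hom_{\D(R)}(P, X[n]) \cong \Hom_R(P, H^n(X))$ for projective $P$ in degree $0$ deserving explicit mention.
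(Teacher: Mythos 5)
Your proof is correct and takes essentially the same route that the paper does; the paper itself only cites \cite{CX} and \cite{AKL1} for Proposition~\ref{prop CX}, but the argument it spells out for the companion proposition about $\Loc(\sigma)^\perp$ (rewriting $\Hom_{\D(R)}(\sigma,Y[n])=0$ as $\sigma$ inducing isomorphisms $\Hom_{\D(R)}(P',Y[n])\to\Hom_{\D(R)}(P,Y[n])$ and then using $\Hom_{\D(R)}(P,Y)\cong\Hom_R(P,H^0(Y))$ for projective $P$) is exactly the mapping-cone / long exact sequence computation you carry out.
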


Note that $\Loc(\Sigma)^\perp$ identifies with $\D(R_\Sigma)$ if and only if $\Tor_i^R(R_\Sigma,R_\Sigma)=0$ for all $i>0$ (see, for example, \cite[Proposition 3.6]{CX} and \cite[Theorem 0.7]{NR}).
\medskip

In this subsection, we are interested in complexes that arise from partial silting modules, as introduced in \cite{AMV1}. Recall from \cite[Lemma 4.8(3)]{AMV1} that an $R$-module is partial silting if and only if it is the cokernel of a map $\sigma$ between (not necessarily finitely generated) projective $R$-modules which, when viewed as an object in $\D(R)$, is partial silting in the sense of Definition \ref{def silting}. Hence, $\sigma$ gives rise to the smashing subcategory $\Loc(\sigma)$ of $\D(R)$. As before, we will check that $\Loc(\sigma)^\perp$ is cohomologically determined. To make this precise, recall that to every partial silting module with respect to a map $\sigma$ like above, we can associate a localisation of rings $R\longrightarrow R_\sigma$, called the silting ring epimorphism with respect to $\sigma$ (see \cite{AMV2}). In this case, the category of modules over $R_\sigma$ identifies via restriction with the full subcategory $\Ycal_\sigma$ of all $R$-modules $X$ such that $\Hom_R(\sigma,X)$ is an isomorphism. We obtain an analogous statement to Proposition \ref{prop CX}.

\begin{proposition}
With the notation above, we have that $\Loc(\sigma)^\perp$ coincides with the subcategory of complexes in $\D(R)$ whose cohomologies lie in $\Ycal_\sigma$.
\end{proposition}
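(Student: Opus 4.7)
The plan is to follow the same strategy as in Proposition~\ref{prop CX}: translate the condition $X\in\Loc(\sigma)^\perp=\sigma^{\perp_{\mathbb{Z}}}$ into a cohomological condition on $X$ via a long exact sequence coming from a natural distinguished triangle for~$\sigma$. Note that the proof of Proposition~\ref{prop CX} from \cite{AKL1,CX} only uses projectivity of the source and target of $\sigma$, not their finite generation, so the same ideas should apply to our setting verbatim.

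First, I would view $\sigma$ as the 2-term complex $[P^{-1}\xrightarrow{\sigma} P^0]$ in $\D(R)$ with $P^0$ in degree~$0$. Then $\sigma$ is the mapping cone of the module map $\sigma\dd P^{-1}\to P^0$ regarded as a morphism of stalk complexes in degree~$0$, and we have a distinguished triangle
$$P^{-1}\xrightarrow{\sigma} P^0 \longrightarrow \sigma \longrightarrow P^{-1}[1]$$
in $\D(R)$. Applying $\Hom_{\D(R)}(-,X[n])$ to this triangle and using that $P^{-1}, P^{0}$ are projective $R$-modules --- so that each $P^i$ is $K$-projective as a stalk complex and $\Hom_R(P^i,-)$ is exact, giving the natural isomorphism $\Hom_{\D(R)}(P^i,X[n])\cong\Hom_R(P^i,H^n(X))$ --- yields a long exact sequence
$$\cdots\to\Hom_R(P^0,H^n(X))\xrightarrow{\sigma^*_n}\Hom_R(P^{-1},H^n(X))\to\Hom_{\D(R)}(\sigma,X[n+1])\to\Hom_R(P^0,H^{n+1}(X))\xrightarrow{\sigma^*_{n+1}}\cdots$$
where $\sigma^*_n$ denotes $\Hom_R(\sigma,H^n(X))$.

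Inspecting this sequence, the vanishing of $\Hom_{\D(R)}(\sigma,X[n])$ for every $n\in\Z$ is equivalent to each $\sigma^*_n$ being simultaneously injective and surjective, i.e.\ an isomorphism, for every $n\in\Z$. But this is exactly the condition $H^n(X)\in\Ycal_\sigma$ for all $n\in\Z$. There is no real obstacle in this argument: the only delicate point is the identification $\Hom_{\D(R)}(P^i,X[n])\cong\Hom_R(P^i,H^n(X))$ for a projective module $P^i$, which is standard, and the observation that the connecting maps in the long exact sequence are induced by $\sigma$, which is immediate from the mapping-cone presentation of the triangle.
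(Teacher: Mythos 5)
Your argument is correct and is essentially the paper's own proof: both reduce $\Hom_{\D(R)}(\sigma,X[n])=0$ for all $n$ to the condition that $\sigma$ induces isomorphisms $\Hom_R(P^0,H^n(X))\to\Hom_R(P^{-1},H^n(X))$, using the identification $\Hom_{\D(R)}(P,X[n])\cong\Hom_R(P,H^n(X))$ for a projective module $P$. You merely make explicit the long exact sequence that the paper leaves implicit in the phrase ``this translates to asking.''
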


\begin{proof}
The statement follows analogously to Proposition \ref{prop CX}. For the reader's convenience, we include an argument based on the proof of \cite[Lemma 4.2]{AKL1}. Note that an object $Y$ lies in $\Loc(\sigma)^\perp$ if and only if, for all $n\in\Zbb$, we have $\Hom_{\D(R)}(\sigma,Y[n])=0$. If we write $\sigma\colon P\longrightarrow P'$ as a map of projective $R$-modules, this translates to asking that $\sigma$ induces an isomorphism of the form 
$$\xymatrix{\Hom_{\D(R)}(P',Y[n])\ar[rr]&& \Hom_{\D(R)}(P,Y[n])}$$ for all $n\in\Z$. Since, for any projective $R$-module $P$ and any $Y$ in $\D(R)$, we have $\Hom_{\D(R)}(P,Y)\cong\Hom_R(P,H^0(Y))$, it follows that $Y$ lies in $\Loc(\sigma)^\perp$ if and only if $H^n(Y)$ lies in $\Ycal_\sigma$ for all $n\in\Zbb$.
\end{proof}

Keeping in mind the discussion above, the main result of this subsection comes as a direct consequence of the fact that every universal localisation is a silting ring epimorphism (see \cite[Theorem 6.7]{MS}).

\begin{theorem}
Let $\Sigma$ be a set of maps between finitely generated projective $R$-modules. Then there is a partial silting $R$-module with respect to a map $\sigma$ such that $\Loc(\Sigma)=\Loc(\sigma)$ in $\mathbb{D}(R)$.
\end{theorem}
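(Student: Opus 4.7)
The plan is to realize $\Loc(\Sigma)$ as the kernel of the universal localisation at $\Sigma$, then invoke \cite[Theorem 6.7]{MS} to replace that universal localisation by a silting ring epimorphism coming from a partial silting module, and finally compare the two resulting smashing subcategories via their right perpendiculars.

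More concretely, first I would attach to $\Sigma$ the universal localisation $R\longrightarrow R_\Sigma$ in the sense of \cite[Chapter 4]{Scho}; by construction the associated class $\Ycal_\Sigma$ of $R$-modules on which every $\sigma\in\Sigma$ acts as an isomorphism equals $\rmod{R_\Sigma}$ under restriction. Next, by \cite[Theorem 6.7]{MS}, every such universal localisation is a silting ring epimorphism: there is a partial silting $R$-module arising from a map $\sigma$ between projective $R$-modules whose associated ring epimorphism $R\longrightarrow R_\sigma$ agrees with $R\longrightarrow R_\Sigma$. In particular we obtain the equality of subcategories $\Ycal_\sigma=\Ycal_\Sigma$ of $\rmod{R}$.

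Combining this with the two cohomological descriptions of right perpendiculars (Proposition \ref{prop CX} for $\Sigma$ and the analogous statement for $\sigma$ proved just above), we conclude
\[
\Loc(\Sigma)^\perp \;=\; \{Y\in\D(R)\,:\,H^n(Y)\in\Ycal_\Sigma \text{ for all } n\in\Zbb\} \;=\; \Loc(\sigma)^\perp.
\]
It then remains to pass back from the perpendiculars to the localising subcategories themselves. This is where I would use that both subcategories are smashing: $\Loc(\Sigma)$ is smashing because it is compactly generated (by the corollary following Theorem \ref{existence of adjoints}), and $\Loc(\sigma)$ is smashing by Proposition \ref{partial silting def} together with the fact that $\sigma$ is partial silting in $\D(R)$. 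By Proposition \ref{recollement}, each smashing subcategory is the left perpendicular of its right perpendicular, so
\[
\Loc(\Sigma) \;=\; {}^\perp\!\big(\Loc(\Sigma)^\perp\big) \;=\; {}^\perp\!\big(\Loc(\sigma)^\perp\big) \;=\; \Loc(\sigma),
\]
which gives the theorem.

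The conceptual heavy lifting is entirely absorbed into the cited result \cite[Theorem 6.7]{MS}, so within this subsection there is no serious obstacle; the only point that requires care is making sure the two frameworks match, namely that the partial silting module produced by \cite{MS} is indeed the cokernel of a map $\sigma$ of projective modules in the sense of \cite[Lemma 4.8(3)]{AMV1}, so that the corresponding object of $\D(R)$ is partial silting in the sense of Definition \ref{def silting} and the companion proposition describing $\Loc(\sigma)^\perp$ applies verbatim.
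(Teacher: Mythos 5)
Your argument is correct and follows exactly the route the paper has in mind: invoke \cite[Theorem 6.7]{MS} to replace the universal localisation $R\to R_\Sigma$ by a silting ring epimorphism $R\to R_\sigma$ with $\Ycal_\sigma=\Ycal_\Sigma$, compare $\Loc(\Sigma)^\perp$ and $\Loc(\sigma)^\perp$ via the two cohomological descriptions, and recover the localising subcategories as left perpendiculars of their right perpendiculars. (The paper compresses all of this into ``direct consequence,'' and strictly speaking coreflectiveness of $\Loc(\Sigma)$ and $\Loc(\sigma)$ -- i.e.\ that $(\Loc(-),\Loc(-)^\perp)$ is a torsion pair, which holds by Theorem \ref{existence of adjoints} -- already suffices for the last step, so smashing is mild overkill, but not incorrect.)
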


%%%%%%%%%%%%%%%%%%%%%%%%%%%%%%%%%%%%%%%%%%%%%%%%%%%%%%%%%%%%%%%%%%%%

\subsection{The general case}\label{Subsection general case}
In this subsection, $\Tcal$ will denote again a triangulated category with coproducts and we will focus on partial silting objects as defined in Definition \ref{def silting}.
We begin by recalling a result, which states that, for many triangulated categories, sets of compact objects give rise to suspended TTF triples. 

\begin{proposition}\label{cg TTF}
Suppose that $\Tcal$ satisfies Brown representability and let $\Sigma$ be a set of compact objects in $\Tcal$. Then there is a suspended TTF triple of the form $({}^\perp(\Sigma^{\perp_{>0}}),\Sigma^{\perp_{>0}},(\Sigma^{\perp_{>0}})^\perp)$. 
\end{proposition}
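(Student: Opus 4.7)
Set $\Vcal:=\Sigma^{\perp_{>0}}$, $\Ucal:={}^\perp\Vcal$, and $\Wcal:=\Vcal^\perp$. The plan is to verify that both $(\Ucal,\Vcal)$ and $(\Vcal,\Wcal)$ are torsion pairs, with $\Vcal$ suspended, so that the triple is a suspended TTF triple in the sense of Definition \ref{def t-structure}.

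First I would collect the easy closure properties of $\Vcal$. Since every $S\in\Sigma$ is compact, the functor $\Hom_\Tcal(S,-)$ commutes with coproducts, so $\Vcal$ is closed under coproducts. From the long exact sequence of $\Hom_\Tcal(S,-)$ applied to a triangle, $\Vcal$ is closed under extensions; it is also obviously closed under summands and under the shift $[1]$, hence suspended. In particular $\Ucal$ and $\Wcal$ are automatically closed under summands, and axiom (2) of a torsion pair holds tautologically for both candidate pairs.

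For the co-t-structure $(\Ucal,\Vcal)$: observe that $\Sigma[-1]=\{S[-1]\dd S\in\Sigma\}$ is again a set of compact objects of $\Tcal$, and a direct computation identifies
\[
(\Sigma[-1])^{\perp_{\geq 0}} \;=\; \{X\dd \Hom_\Tcal(\Sigma,X[m])=0\ \forall\,m\geq 1\} \;=\; \Sigma^{\perp_{>0}} \;=\; \Vcal.
\]
I would then invoke the existence theorem for co-t-structures generated by a set of compact objects (in the flavour of Bondarko's weight structures, Pauksztello, or the corresponding statement used in \cite{NSZ}), which under Brown representability produces the co-t-structure $({}^\perp((\Sigma[-1])^{\perp_{\geq 0}}),(\Sigma[-1])^{\perp_{\geq 0}})=(\Ucal,\Vcal)$, in particular yielding the desired torsion pair.

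For the t-structure $(\Vcal,\Wcal)$: here I would show $\Vcal$ is an aisle. Writing $\Scal:=\bigcup_{n>0}\Sigma[-n]$ one has $\Vcal=\Scal^\perp$, and $\Scal$ is again a set of compact objects. Since $\Vcal$ is a pre-aisle (suspended, cocomplete, closed under extensions and summands) orthogonal to a \emph{set} of compact objects, Brown representability supplies a right adjoint to the inclusion $\Vcal\hookrightarrow\Tcal$: for each $X\in\Tcal$ one represents the cohomological coproduct-to-product functor $\Hom_\Tcal(-,X)|_\Vcal$, exactly as in the Keller--Vossieck / Alonso--Jerem\'ias--Souto existence result for t-structures. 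This yields the torsion pair $(\Vcal,\Vcal^\perp)=(\Vcal,\Wcal)$, which is a t-structure since $\Vcal$ is suspended. Combining both torsion pairs gives the claimed suspended TTF triple.

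The main obstacle is the second step: although $\Vcal$ has all the expected abstract closure properties, coreflectivity of $\Vcal$ does \emph{not} come for free from closure under coproducts and extensions, and genuinely requires Brown representability together with the compactness of the generators $\Scal$ to build the right adjoint (equivalently, to construct the approximation triangles $V\to X\to W$ with $V\in\Vcal$, $W\in\Wcal$). Once this adjoint is in hand, the rest of the argument is formal.
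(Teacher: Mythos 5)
Your high-level plan coincides with the paper's: exhibit the two torsion pairs $(\Ucal,\Vcal)$ and $(\Vcal,\Wcal)$ separately, using compactness for the first and Brown representability for the second. Your treatment of the co-t-structure $(\Ucal,\Vcal)$ is in line with the paper (which cites \cite[Theorem 4.3]{AI} rather than the Bondarko/Pauksztello/NSZ variants you mention, but the content is the same). The problem is with the second torsion pair, and you yourself flag it as ``the main obstacle'' and then gloss over it.

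You propose to obtain a right adjoint to $\Vcal\hookrightarrow\Tcal$ by ``representing the cohomological coproduct-to-product functor $\Hom_\Tcal(-,X)|_\Vcal$, exactly as in the Keller--Vossieck / Alonso--Jerem\'ias--Souto existence result for t-structures.'' This does not work as stated, for two reasons. First, Brown representability is a property of the ambient triangulated category $\Tcal$, not of the non-triangulated suspended subcategory $\Vcal$; there is no representability theorem available for functors defined only on $\Vcal$. Second, and more importantly, the Keller--Vossieck-type construction you invoke builds the t-structure \emph{generated} by $\Scal$, namely $(\Susp(\Scal),\Scal^{\perp_{\leq 0}})$. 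With $\Scal=\bigcup_{n>0}\Sigma[-n]$ one computes $\Scal^{\perp_{\leq 0}}=\Sigma^{\perp_\Z}$, so that construction yields the pair $(\Susp(\Scal),\Sigma^{\perp_\Z})$ --- a different torsion pair from $(\Vcal,\Vcal^\perp)$, since $\Vcal=\Scal^\perp$ is not $\Susp(\Scal)$ and $\Vcal^\perp$ is not $\Sigma^{\perp_\Z}$. The statement you actually need is that a co-t-structure $(\Ucal,\Vcal)$ on a category satisfying Brown representability, with $\Vcal$ closed under products, admits an adjacent t-structure $(\Vcal,\Vcal^\perp)$; this is Bondarko's \cite[Theorem 3.1.2]{Bondarko}, which is exactly what the paper cites here and which does not reduce to a small-object argument applied to a generating set.
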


\begin{proof}
Since $\Sigma$ is a set of compact objects,  the subcategory $\Sigma^{\perp_{>0}}$ is cocomplete and it follows from \cite[Theorem 4.3]{AI} that the set of all negative shifts of objects from $\Sigma$ generates a torsion pair, namely $({}^\perp(\Sigma^{\perp_{>0}}),\Sigma^{\perp_{>0}})$. As $\Tcal$ satisfies Brown representability, it follows from \cite[Theorem 3.1.2]{Bondarko} that we have a TTF triple, as wanted. 
\end{proof}

In view of Theorem \ref{bij partial silting}, it is often enough to check that the t-structure $(\Sigma^{\perp_{>0}},(\Sigma^{\perp_{>0}})^\perp)$ is right nondegenerate to guarantee that a TTF triple as in Proposition \ref{cg TTF} arises from a partial silting object $T$ in $\Tcal$. This, however, cannot always be the case. Take, for example, $\Sigma$ to be the set of all compact objects in a compactly generated triangulated category $\Tcal$. Then we have $\Sigma^{\perp_{>0}}=\Sigma^{\perp_\Z}=0$ and, thus, $(\Sigma^{\perp_{>0}})^\perp=\Tcal$. The following proposition provides some sufficient conditions for $\Sigma$ to induce a silting TTF triple.

\begin{proposition}\label{cg vs partial}
Let $\Sigma$ be a set of compact objects in a well generated triangulated category $\Tcal$ satisfying dual Brown representability. Assume that for any $\sigma$ in $\Sigma$ there is an integer $n_\sigma>0$ for which $\sigma[n_\sigma]$ lies in $\Sigma^{\perp_{>0}}$. Then there is a partial silting object $T$ in $\Tcal$ such that $\Sigma^{\perp_{>0}}=T^{\perp_{>0}}$. In particular, we have that $\mathsf{Loc}(\Sigma)=\mathsf{Loc}(T)$.
\end{proposition}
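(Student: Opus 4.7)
The plan is to verify the hypotheses of Theorem \ref{bij partial silting} for the suspended TTF triple supplied by Proposition \ref{cg TTF}. Since $\Tcal$ is well generated it satisfies Brown representability, so Proposition \ref{cg TTF} yields the suspended TTF triple
\[(\Ucal,\Vcal,\Wcal) := \big({}^\perp(\Sigma^{\perp_{>0}}),\ \Sigma^{\perp_{>0}},\ (\Sigma^{\perp_{>0}})^\perp\big),\]
whose co-t-structure $(\Ucal,\Vcal)$ is automatically generated by the set $\{\sigma[k] : \sigma \in \Sigma,\ k<0\}$, since $\Vcal$ is the right perp of that set. Thus the only remaining hypothesis of Theorem \ref{bij partial silting} to check is the right nondegeneracy of the t-structure $(\Vcal,\Wcal)$.

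I would reduce right nondegeneracy to the equality $\Loc(\Vcal) = \Tcal$. Granted this, for any $X \in \bigcap_{n\in\Z}\Wcal[n]$ one has $\Hom_\Tcal(V, X[m]) = 0$ for every $V \in \Vcal$ and every $m \in \Z$; the subcategory
\[\{Y \in \Tcal : \Hom_\Tcal(Y, X[m]) = 0 \text{ for all } m \in \Z\}\]
is closed under coproducts, extensions and shifts, and contains $\Vcal$, hence contains $\Loc(\Vcal) = \Tcal$, forcing $\Hom_\Tcal(X,X) = 0$ and so $X = 0$. To establish $\Loc(\Vcal) = \Tcal$, I would invoke the torsion pair $(\Loc(\Sigma), \Sigma^{\perp_\Z})$ of Theorem \ref{existence of adjoints}, giving $\Tcal = \Loc(\Sigma) \ast \Sigma^{\perp_\Z}$, and check that both factors lie in $\Loc(\Vcal)$. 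The right-hand inclusion is immediate: $\Sigma^{\perp_\Z} \subseteq \Sigma^{\perp_{>0}} = \Vcal \subseteq \Loc(\Vcal)$. The left-hand inclusion is where the hypothesis enters: for each $\sigma \in \Sigma$ one has $\sigma[n_\sigma] \in \Vcal \subseteq \Loc(\Vcal)$, so by closure of $\Loc(\Vcal)$ under shifts, $\sigma \in \Loc(\Vcal)$; hence $\Sigma \subseteq \Loc(\Vcal)$ and $\Loc(\Sigma) \subseteq \Loc(\Vcal)$.

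With right nondegeneracy in hand, Theorem \ref{bij partial silting} delivers a partial silting object $T \in \Tcal$ with $T^{\perp_{>0}} = \Vcal = \Sigma^{\perp_{>0}}$. For the final assertion, observe that
\[T^{\perp_\Z} = \bigcap_{n \in \Z} T^{\perp_{>0}}[n] = \bigcap_{n \in \Z} \Sigma^{\perp_{>0}}[n] = \Sigma^{\perp_\Z},\]
and applying Theorem \ref{existence of adjoints} once more, $(\Loc(T), T^{\perp_\Z})$ and $(\Loc(\Sigma), \Sigma^{\perp_\Z})$ are both torsion pairs in $\Tcal$. Since their right-hand classes coincide, so do their left-hand classes: $\Loc(T) = {}^\perp(T^{\perp_\Z}) = {}^\perp(\Sigma^{\perp_\Z}) = \Loc(\Sigma)$.

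The main obstacle, as I see it, is exactly the inclusion $\Sigma \subseteq \Loc(\Vcal)$, which is the one place the extra hypothesis on the integers $n_\sigma$ is used. Without it, as the discussion preceding the statement illustrates (take $\Sigma$ to be all compacts in a compactly generated $\Tcal$, making $\Vcal = 0$), the key reduction $\Loc(\Vcal) = \Tcal$ can collapse completely, and the TTF triple supplied by Proposition \ref{cg TTF} need not fall under the bijection of Theorem \ref{bij partial silting}.
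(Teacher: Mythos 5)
Your proof is correct and follows essentially the same route as the paper: invoke Proposition \ref{cg TTF} for the suspended TTF triple, reduce right nondegeneracy to $\Loc(\Sigma^{\perp_{>0}})=\Tcal$, establish that equality from the hypothesis on the $n_\sigma$ together with the decomposition $\Tcal=\Loc(\Sigma)\ast\Sigma^{\perp_\Z}$, and then apply Theorem \ref{bij partial silting}; the final identification $\Loc(\Sigma)=\Loc(T)$ via $\Sigma^{\perp_\Z}=T^{\perp_\Z}$ is also the paper's argument. You merely spell out in more detail the step (right nondegeneracy from $\Loc(\Vcal)=\Tcal$) that the paper leaves implicit.
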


\begin{proof}
We denote by $({}^\perp(\Sigma^{\perp_{>0}}),\Sigma^{\perp_{>0}},\Wcal)$ the suspended TTF triple obtained from Proposition \ref{cg TTF}. To show that this TTF triple arises from a partial silting object $T$ in $\Tcal$, following Theorem \ref{bij partial silting}, we only need to check that $\bigcap_{n\in\mathbb{Z}}\Wcal[n]=0$. Note that for the latter it will be sufficient to check that $\Loc(\Sigma^{\perp_{>0}})=\Tcal$. By our assumption on $\Sigma$, it is clear that $\mathsf{Loc}(\Sigma^{\perp_{>0}})$ contains $\mathsf{Loc}(\Sigma)$. Since it also contains $\Sigma^{\perp_{\mathbb{Z}}}$, and since $\Tcal=\mathsf{Loc}(\Sigma)\ast \Sigma^{\perp_{\mathbb{Z}}}$, it follows that $\mathsf{Loc}(\Sigma^{\perp_{>0}})=\Tcal$, as wanted. 
For the final assertion, note that, since $\Sigma^{\perp_{>0}}=T^{\perp_{>0}}$, we also have 
$$\Sigma^{\perp_{\mathbb{Z}}}=\bigcap_{n\in\mathbb{Z}}\Sigma^{\perp_{>0}}[n]=\bigcap_{n\in\mathbb{Z}}T^{\perp_{>0}}[n]=T^{\perp_{\mathbb{Z}}}.$$ 
This implies that $\mathsf{Loc}(\Sigma)=\mathsf{Loc}(T)$, as wanted.
\end{proof}

To show that all compactly generated localising subcategories of a given triangulated category have a partial silting generator, we should ask the underlying triangulated category to be {\em large enough} in a suitable sense (compare with Remark \ref{Remark silting}). One way of guaranteeing this, is to ask for the existence of a compact silting object. Note that if $\Tcal$ is an algebraic triangulated category, i.e. $\Tcal$ is triangle equivalent to the stable category of a Frobenius exact category, the existence of a compact silting object precisely says that $\Tcal$ is triangle equivalent to the derived category of dg modules over a non-positive differential graded ring (see \cite[Theorem 8.5]{Keller} and \cite{Kell}). Thus, we restrict ourselves to this setting.

\begin{theorem}
Let $\D(R)$ be the derived category of dg modules over a non-positive differential graded ring $R$ and let $\Lcal$ be a compactly generated localising subcategory of $\D(R)$. Then there is a partial silting object $T$ in $\D(R)$ such that $\Lcal=\Loc(T)$. In particular, every compactly generated localising subcategory of $\D(R)$ admits a nondegenerate t-structure.
\end{theorem}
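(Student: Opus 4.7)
The plan is to apply Proposition~\ref{cg vs partial} to a suitably shifted generating set. Since $R$ is a non-positive dg ring, $\D(R)$ is compactly generated (by $R$), hence well generated and satisfies dual Brown representability, so the hypotheses of Proposition~\ref{cg vs partial} are available for every set of compact objects of $\D(R)$.

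Write $\Lcal = \Loc(\Sigma)$ for a set $\Sigma$ of compact objects. Each $\sigma \in \Sigma$ lies in the thick subcategory generated by $R$, so it admits a finite construction from shifts $R[m]$, $m \in [a_\sigma, b_\sigma]$, using only cones, summands, and finite direct sums. Put $c_\sigma := b_\sigma - a_\sigma \geq 0$ and $\sigma' := \sigma[-b_\sigma]$, so that $\sigma'$ admits an analogous construction with shifts in $[-c_\sigma, 0]$. Setting $\Sigma' := \{\sigma' : \sigma \in \Sigma\}$, we still have $\Loc(\Sigma') = \Loc(\Sigma) = \Lcal$, since localising subcategories are closed under shifts.

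The key observation is a uniform Hom-vanishing: for each $\sigma' \in \Sigma'$ and every integer $k > c_\sigma$, one has $\Hom_{\D(R)}(\tau', \sigma'[k]) = 0$ for every $\tau' \in \Sigma'$. Indeed, iterating the Hom long exact sequences arising from the constructions of $\tau'$ and $\sigma'[k]$ reduces this to the atomic pieces $\Hom_{\D(R)}(R[p], R[q+k]) = H^{q+k-p}(R)$ with $p \in [-c_\tau, 0]$ and $q \in [-c_\sigma, 0]$; since $q + k - p \geq k - c_\sigma > 0$, these vanish by non-positivity of $R$. Hence the positive integer $n_{\sigma'} := c_\sigma + 1$ witnesses $\sigma'[n_{\sigma'}] \in (\Sigma')^{\perp_{>0}}$. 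Proposition~\ref{cg vs partial} then produces a partial silting object $T$ in $\D(R)$ with $\Loc(T) = \Loc(\Sigma') = \Lcal$, which proves the first assertion. For the second, Proposition~\ref{partial silting def} ensures that $T$ is silting inside $\Loc(T) = \Lcal$, and by Remark~\ref{Remark silting}(2) the associated silting t-structure in $\Lcal$ is nondegenerate. The main subtlety is securing a $\tau'$-uniform vanishing threshold for each $\sigma'$; shifting each generator $\sigma$ by $-b_\sigma$, so that its construction sits in non-positive degrees, is precisely the device that converts the non-positivity of $R$ into the required uniform vanishing.
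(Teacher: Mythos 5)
Your proof is correct and follows essentially the same route as the paper: both shift the compact generators so that each is built from non-positive shifts of $R$, use the non-positivity of $R$ to obtain the vanishing threshold required by Proposition~\ref{cg vs partial}, and then invoke Proposition~\ref{partial silting def} for nondegeneracy. Your version merely makes explicit the bookkeeping that the paper compresses into the phrase ``we can assume, without loss of generality.''
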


\begin{proof}
By assumption, $R$ is a compact silting object in $\D(R)$. By \cite[Proposition 8.2]{Keller}, the compact objects in $\D(R)$ are precisely those in the smallest thick subcategory containing $R$. Therefore, we can assume, without loss of generality, that the localising subcategory $\Lcal$ is generated by a set of compact objects $\Sigma$ in $\D(R)$ such that every $\sigma$ in $\Sigma$ is a finite iterated extension of non-positive shifts of objects from $\add R$, where $\add R$ denotes the subcategory of $\D(R)$ whose objects are summands of finite direct sums of copies of $R$. Since $R$ is non-positive we have that $\Hom_{\D(R)}(R,R[i])=0$ for all $i>0$, from which it follows that for any $\sigma$ in $\Sigma$ there is an integer $n_\sigma>0$ for which $\sigma[n_\sigma]$ lies in $\Sigma^{\perp_{>0}}$. Hence, Proposition \ref{cg vs partial} completes the argument. Finally, the existence of a nondegenerate t-structure in $\Lcal$ is a consequence of Proposition \ref{partial silting def}.
\end{proof}

The following example shows that possessing a nondegenerate t-structure does not imply that a given smashing subcategory $\Lcal$ of $\mathbb{D}(R)$ is compactly generated. 
Nevertheless, it is not clear whether the existence of a silting object in $\Lcal$ guarantees compact generation.

\begin{example}
A typical example of a smashing subcategory that is not compactly generated can be obtained by considering an idempotent maximal ideal $I$ of a discrete valuation ring $R$. Then $\mathsf{Loc}(I)$ inside $\D(R)$ is known to be smashing but not compactly generated (see \cite[Theorem 7.2]{BaSt}). Moreover, since $f\colon R\longrightarrow R/I$ is a homological ring epimorphism, the derived category $\mathbb{D}(R/I)$ identifies with the subcategory $\Loc(I)^\perp$ of $\mathbb{D}(R)$ via the restriction functor $f_*$. 
Now, it is clear that the standard t-structure in $\mathbb{D}(R)$ restricts to $\mathbb{D}(R/I)$ (by intersection) and that, moreover, the heart of the restriction, namely $\rmod{R/I}$, identifies with a Serre subcategory of $\rmod{R}$. From \cite[Lemma 3.3]{CR} it then follows that the standard t-structure in $\mathbb{D}(R)$ is obtained by gluing t-structures along the recollement induced by $\mathsf{Loc}(I)$. In particular, $\mathsf{Loc}(I)$ admits a nondegenerate t-structure.
\end{example}

%%%%%%%%%%%%%%%%%%%%%%%%%%%%%%%%%%%%%%%%%%%%%%%%%%%%%%%%%%%%%%%%%%%%

\end{document}